\documentclass{article}
\usepackage{amsmath}
\usepackage{amsthm}
\usepackage{amssymb}
\usepackage{epstopdf}
\usepackage{graphicx}
\newtheorem{theorem}{Theorem}
\newtheorem{corollary}{Corollary}

\begin{document}
\title
{Sum Rules for Functions of the Riemann Zeta Type}
\author{R.C. McPhedran,\\
School of Physics,\\
University of Sydney}
\maketitle
\begin{abstract}
We consider analytic functions of the Riemann zeta type, for which, if $s$ is a zero, so is $1-s$. We use infinite product representations
of these functions, assuming their zeros to be of first order. We use exponential factors to accelerate convergence, and by comparing the exponential factors with the Taylor series coefficients about $s=0$ of the original function, connect these coefficients with sums of powers of reciprocals of the zeros, in the form of sum rules. Such sum rules have been previously considered by Lehmer and Keiper, but the approach and applications taken here are more general. In related work, a new sufficient condition is found for the Riemann hypothesis, and the basis for this condition is discussed.
\end{abstract}

\section{Introduction}
This paper consists of two almost independent parts. Sections 2-4 study sum rules for inverse powers of analytic functions related to the Riemann zeta function. This work builds on and generalises that in papers by Lehmer and Keiper, and was motivated by the hope that something more could be said about the Riemann hypothesis based on those related functions for which the Riemann hypothesis has been proved to hold, together with their connection to the zeta function. Technical difficulties as yet unresolved have left this aim unachieved in the work reported here. Sections 5 and 6 again  take up the connection between the Riemann hypothesis for the related functions and for the zeta function. The result is a new sufficient condition for the validity of the Riemann hypothesis, which it is hoped will be of interest to other workers interested in the question. An argument is put forward for the validity of the new condition.

\section{The Product Representation}
We use the following result from Whittaker and Watson \cite{wandw}:
\begin{theorem}
Let $f(z)$ be a function analytic for all values of $z$, with simple zeros at the points $a_0$, $a_1$, $a_3$, $\ldots$ which tend to infinity as $n\rightarrow \infty$, with $|a_n|\neq 0$ for all $n$.  Suppose we can find a set of circles $C_m$ such that $f'(z)/f(z)$ is bounded on them as $m\rightarrow\infty$. Then $f(z)$ has the following infinite product representation:
\begin{equation}
f(z)=f(0) e^{f'(0) z/f(0)}\prod_{n=1}^\infty \left\{\left(1-\frac{z}{a_n}\right)e^\frac{z}{a_n}\right\}.
\label{wwprod}
\end{equation}
\end{theorem}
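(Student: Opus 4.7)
The plan is to derive the product representation from the Mittag--Leffler style partial-fraction expansion of the logarithmic derivative $g(z) := f'(z)/f(z)$. Because $f$ is entire with simple zeros at the $a_n$ and no other zeros, $g$ is meromorphic with a simple pole of residue $1$ at each $a_n$ and is analytic everywhere else; in particular it is analytic at $0$, where it takes the value $g(0)=f'(0)/f(0)$.

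My first step is to apply the residue theorem to the auxiliary integrand $g(\zeta)/[\zeta(\zeta-z)]$ around the contour $C_m$, for fixed $z$ with $z\neq a_n$ and $m$ large enough that both $z$ and all $a_n$ under consideration lie inside $C_m$. The interior poles contribute: $-g(0)/z$ at $\zeta=0$, $g(z)/z$ at $\zeta=z$, and $1/[a_n(a_n-z)]$ at each $a_n$ inside $C_m$. The crucial analytic input is the hypothesis that $g$ is bounded on $C_m$ while $1/[\zeta(\zeta-z)] = O(|\zeta|^{-2})$, so that once the radii of $C_m$ grow without bound the contour integral vanishes as $m\to\infty$. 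Setting the sum of residues equal to zero and using the elementary identity $1/(z-a_n)+1/a_n = -z/[a_n(a_n-z)]$, I obtain
\begin{equation}
\frac{f'(z)}{f(z)} \;=\; \frac{f'(0)}{f(0)} \;+\; \sum_{n}\left(\frac{1}{z-a_n}+\frac{1}{a_n}\right).
\end{equation}

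The second step is to integrate this identity along a path from $0$ to $z$ avoiding the $a_n$, which yields
\begin{equation}
\log\frac{f(z)}{f(0)} \;=\; \frac{f'(0)}{f(0)}\,z \;+\; \sum_{n}\left[\log\!\left(1-\frac{z}{a_n}\right)+\frac{z}{a_n}\right],
\end{equation}
and then to exponentiate to recover the product (\ref{wwprod}). Care with branch cuts of the logarithm is dispensed with by doing the integration on both sides and observing that the principal-branch summands recombine into the single-valued product factors $(1-z/a_n)e^{z/a_n}$.

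The step I expect to be the main obstacle is the analytic justification of the limit $m\to\infty$: one must verify simultaneously that the contour integral truly tends to zero (not merely that each residue is finite) and that the resulting series converges uniformly on compact subsets of $\mathbb{C}\setminus\{a_n\}$, so that termwise integration and exponentiation are legitimate. The vanishing of the integral follows from the boundedness hypothesis combined with $\mathrm{length}(C_m)/\mathrm{dist}(C_m,0)^2 \to 0$; the uniform convergence of the product rests on the standard estimate $\log(1-z/a_n)+z/a_n = O(|z/a_n|^2)$ together with $\sum|a_n|^{-2}<\infty$, which itself must be extracted from the growth of $f$ implied by the contour hypothesis.
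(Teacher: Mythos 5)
Your proposal is correct and follows essentially the classical argument: the paper itself gives no proof of this theorem, quoting it directly from Whittaker and Watson (\S 7.5--7.6 of \emph{A Course of Modern Analysis}), and the proof given there is exactly your route --- apply the residue theorem to $f'(\zeta)/[f(\zeta)\,\zeta(\zeta-z)]$ over $C_m$, use the boundedness of $f'/f$ on $C_m$ to kill the contour integral, obtain the Mittag--Leffler expansion $f'(z)/f(z)=f'(0)/f(0)+\sum_n\left(\frac{1}{z-a_n}+\frac{1}{a_n}\right)$, then integrate from $0$ to $z$ and exponentiate. The one point you flag as a worry, uniform convergence of the bracketed series on compact sets, comes for free from the same contour estimate (the tail is a contour integral that is uniformly small), so no separate appeal to $\sum|a_n|^{-2}<\infty$ is needed.
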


The exponential factor in (\ref{wwprod}) will be recognised as necessary to ensure convergence. In our case of interest, we may divide the zeros up into a group denoted $a_n$, and a second group denoted $a_{-n}=1-a_n$. We then have
\begin{equation}
(1-\frac{z}{a_n}) (1-\frac{z}{1-a_n})=1-\frac{z(1-z)}{a_n (1-a_n)}.
\label{prod2}
\end{equation}
The infinite product will thus converge absolutely provided $a_n (1-a_n)$ scales faster than $n$ as $n$ goes to infinity. We assume this to be the case: it holds for functions whose zeros behave similarly to those of the Riemann zeta function. The proposition was proved for the function $\xi(s)$ to be considered in the next section by Jacques Hadamard \cite{had}.

Continuing then to study the form of the expansion (\ref{wwprod}) for zeta-like functions, we generalise the convergence factor by using the exponential series to give convergence of the product at a rate faster than  $1/a_n^2$ :
\begin{equation}
\left\{\left(1-\frac{z}{a_n}\right)e^\frac{z}{a_n}\right\}
\rightarrow \left\{\left(1-\frac{z}{a_n}\right)e^{[\sum_{m=1}^M (s^m/m)\sum_n (1/a_n^m)]} \right\} .
\label{prod3}
\end{equation}
The exponential factor will be recognised as having M terms from the series for $-\log (1-z/a_n)$.
We then construct two expansions for zeta-like functions: one is based on the Taylor series for $\log (f(z)/f(0))$:
\begin{equation}
f(z)=f(0) \exp{\left[\sum_{m=1}^M\left.\frac{d^m}{d s^m} \log f(z)\right|_{s=0} \frac{s^m}{m!}\right]}
\exp{\left[\sum_{m=M+1}^\infty \left.\frac{d^m}{d s^m} \log f(z) \right|_{s=0} \frac{s^m}{m!}\right]}
\label{prod4}
\end{equation}
The second is based on the product representation:
\begin{equation}
f(z)=f(0) \exp \left[-\sum_{m=1}^{M} \sum_n \frac{s^m}{m a_n^m} \right]\prod_{n=1}^\infty  \left\{\left(1-\frac{z}{a_n}\right)\exp {\left[\sum_{m=1}^M \frac{s^m}{m}\sum_n \frac{1}{a_n^m}\right]} \right\}  .
\label{prod5}
\end{equation}
Comparing (\ref{prod4}) and (\ref{prod5}), we arrive at an infinite set of sum rules for the zeros of $f(z)$:
\begin{equation}
\left. \frac{1}{m!}\frac{d^m}{d z^m} \log f(z)\right|_{z=0} =-\frac{1}{m}\sum_n\frac{1}{a_n^m}.
\label{prod6}
\end{equation}

Another route to the same results given in equation (\ref{prod6}) is  to take the product representation without any convergence factors:
\begin{equation}
f(z)=f(0) \prod_{n=1}^\infty \left(1-\frac{z}{a_n}\right),
\label{prod7}
\end{equation}
take its logarithm, and compare the Taylor series about $s=0$ of the left-hand side with the combined  expansions of the terms
$\log (1-z/a_n)$ on the right-hand side. The method of the preceding paragraph seems likely to be valid in a wider variety of cases than this alternative.

For an extensive discussion of sum rules and their applications in other contexts, see King \cite{king}.
\section{Examples of Sum Rules}

{\bf Remark:} It should be noted that the two sides of the identity (\ref{prod6}) are complementary in computations. The left-hand side is most easily evaluated when $m$ is small, while the right-hand side will then need the largest set of zeros for accuracy. Conversely, when $m$ increases, the accurate calculation of multiple-order derivatives becomes harder, while the necessary set of zeros becomes smaller.

\subsection{$\xi (s)$}
The function $\xi(s)$ is even under $s\rightarrow 1-s$ and is defined as
\begin{equation}
\xi(s)=\frac{1}{2}  s(s-1) \frac{\Gamma (s/2)\zeta (s)}{\pi^{s/2}}.
\label{xi1}
\end{equation}
That all its zeros lie on the critical line is the Riemann hypothesis, while extensive numerical investigations have not encountered any zeros of multiple order. The data set of zeros we rely on here has 10,000 elements ranging up to $t=9877.78$, and was generated in Mathematica. 
Using this data set, a few low-order examples of values of the left-hand side of (\ref{prod6}) provided by Mathematica and the right-hand side provided by direct summation are given in Table \ref{tab1}. The results  for $m=1,2$ show that the set of 10,000 zeros is insufficient to provide accurate results, but the accuracy improves rapidly from $m=3$ on. The evaluation of higher-order derivatives 
in Mathematica tends to become slower as $m$ increases. With use of only 100 zeros from the data set, a good cross-over choice for
$m$ if accuracy of 12 decimals is sought is 8: below that, values should be provided by direct differentiation in Mathematica, and above that from the set of 100 zeros. The difference in results for $m=8$ is $9.27 \times 10^{-16}$.

\begin{table}
\label{tab1}
\begin{tabular}{|c|c|c|}\hline
$m$ &LHS (\ref{prod6}) & RHS (\ref{prod6}) \\ \hline
1 & -0.0230957 & -0.022961  \\
2& 0.0230772 & 0.0229425 \\
3& 0.0000370527 & 0.0000370527\\
4& -0.0000184068 & -0.0000184068\\
5 & $-1.43019 \times 10^{-7}$ & $-1.43019 \times 10^{-7}$ \\
6& $ 4.69061\times 10^{-8}$ & $ 4.69061\times 10^{-8}$\\
\hline
\end{tabular}
\caption{Numerical examples of the sum rule (\ref{prod6}) for the function $\xi (s)$.}
\end{table}

We now quote some results given by Keiper \cite{keiper} for various expansions linked with the function $\xi (s)$, which we will generalise subsequently to three other functions.  Keiper makes the definition for the expansion of $\xi (s)$ about $s=1$:
\begin{equation}
\frac{\xi'(s)}{\xi(s)}=\sum_{k=0}^\infty \sigma^K_{k+1} (1-s)^k,
\label{consist1}
\end{equation}
where we have added a superscript $K$ to Keiper's $\sigma_k$. Given the symmetry of $\xi(s)$ under $s\rightarrow 1-s$, 
the expansion of $\xi(s)$ about $s=0$ is, from  (\ref{consist1}):
\begin{equation}
-\frac{\xi'(s)}{\xi(s)}=\sum_{k=0}^\infty  \sigma^K_{k+1} s^k
\label{consist2}
\end{equation}
We compare this with the result (\ref{prod6}), which for this case is
\begin{equation}
\log\left( \frac{\xi(s)}{\xi(0)}\right)=- \sum_{k=1}^\infty \left(\frac{1}{k}\right) \sigma_k^M s^k,
\label{consist3}
\end{equation}
denoting the sum over zeros in this equation by $\sigma_k^M $. Taking the derivative of (\ref{consist3}), we arrive at the consistency
of the two approaches:
\begin{equation}
\sigma_k^M =\sigma_k^K ~{\rm for all}~ k.
\label{consist4}
\end{equation}
Hence, the superscripts can be dropped.


Keiper denotes the zeros of $\xi (s)$ by $\rho$, so that (\ref{consist1}) gives
\begin{equation}
\sigma_k=\sum_{\rho} \frac{1}{\rho^k}.
\label{ke2}
\end{equation}
Keiper further derives two sum rules from the functional equation for $\xi(s)$:
\begin{equation}
\sum_{k=1}^\infty  \frac{1}{k}\sigma_k=0 ,
\label{ke3}
\end{equation}
and
\begin{equation}
\sigma_1=-\sum_{k=1}^\infty  \sigma_k .
\label{ke4}
\end{equation}
He also gives a recurrence relation:
\begin{equation}
\sigma_{j+1}=(-1)^{j+1}\sum_{k=1}^\infty \left(\begin{tabular}{c}
$k-1$\\
$j$
\end{tabular}\right)  \sigma_k
\label{ke5}
\end{equation}

Keiper also consider the relationship between the coefficients $\sigma_k$ and those occurring in two further expansions:
\begin{equation}
\frac{\xi'(1/s)}{\xi(1/s)}=\sum_{k=0}^\infty \tau_{k} (1-s)^k ,
\label{ke6}
\end{equation}
and
\begin{equation}
\log (2\xi(1/s)) =\sum_{k=0}^\infty \lambda _{k} (1-s)^k.
\label{ke7}
\end{equation}
He shows that:
\begin{equation}
\tau_0=\sigma_1,
\label{ke8}
\end{equation}
and
\begin{equation}
\tau_k=\sum_{j=1}^k \left(\begin{tabular}{c}
$k-1$\\
$j-1$
\end{tabular}\right) (-1)^j \sigma_{j+1} ~{\rm for} ~ k\ge 1.
\label{ke9}
\end{equation}
Also,
\begin{equation}
\lambda_0= 0,
\label{ke10}
\end{equation}
and
\begin{equation}
\lambda_k=\sum_{j=1}^k \frac{(-1)^{j-1}}{j} \left(\begin{tabular}{c}
$k-1$\\
$j-1$
\end{tabular}\right) (-1)^j \sigma_{j} ~{\rm for} ~ k\ge 1.
\label{ke11}
\end{equation}
Note that equation (\ref{ke10}) is correct, even though the multiple precision tables of these coefficients in the Appendix to Keiper's paper ascribes a  non-zero value to $\lambda_0$. (Keiper also derives the connection between the $\sigma_k$ and the
Stieltjes constants $\gamma_k$, but we will not pursue this topic here.)

The coefficients $\tau_k$ hold a particular interest in relation to the Riemann hypothesis. Indeed, as Keiper shows,
\begin{equation}
\tau_{m-1}=-\sum_\rho  \left(\frac{\rho}{\rho-1}\right)^m \rho^{-2}.
\label{ke12}
\end{equation}
From (\ref{ke12}), if the Riemann hypothesis holds, the $|\tau_k|$ must be bounded by 
\begin{equation}
\sum_\rho |\rho|^{-2}=0.046191479322\ldots .
\label{ke13}
\end{equation}
On the other hand, if the $|\tau_k|$ are bounded, then for no $\rho$, $|\rho|>|1-\rho|$, so the Riemann hypothesis holds.

In terms of the behaviour of the $\lambda_k$, Li's criterion \cite{li, bomblag} states that the Riemann hypothesis is equivalent to 
$\lambda_k\ge 0$ for every positive integer $k$. These two criteria are illustrated  in Fig. \ref{fig-xi1}. The quantities $\tau_k$ decrease as $k$ increases in the range shown, moving further below the limit $0.046191479322$. The quantities $\lambda_k$ increase roughly linearly with $k$ (the slope being about $0.023$), again moving away from the limit of zero. Keiper comments on the difficulty of finding numerically exceptions to the Riemann hypothesis using this sort of behaviour.
 Indeed, if we consider the equation
 \begin{equation}
 \lambda_m=\frac{1}{m}\sum_\rho \left[ 1- \left(\frac{\rho}{\rho-1}\right)^m\right],
 \label{ke14}
 \end{equation}
 then for the quantity in square brackets to become negative for an exception to the Riemann hypothesis with $t>T$,
 we require (roughly) $m>2 T^2$. Currently, $T=O(10^9)$, so $m>O(10^{18})$.
\begin{figure}[tbh]
\includegraphics[width=6 cm]{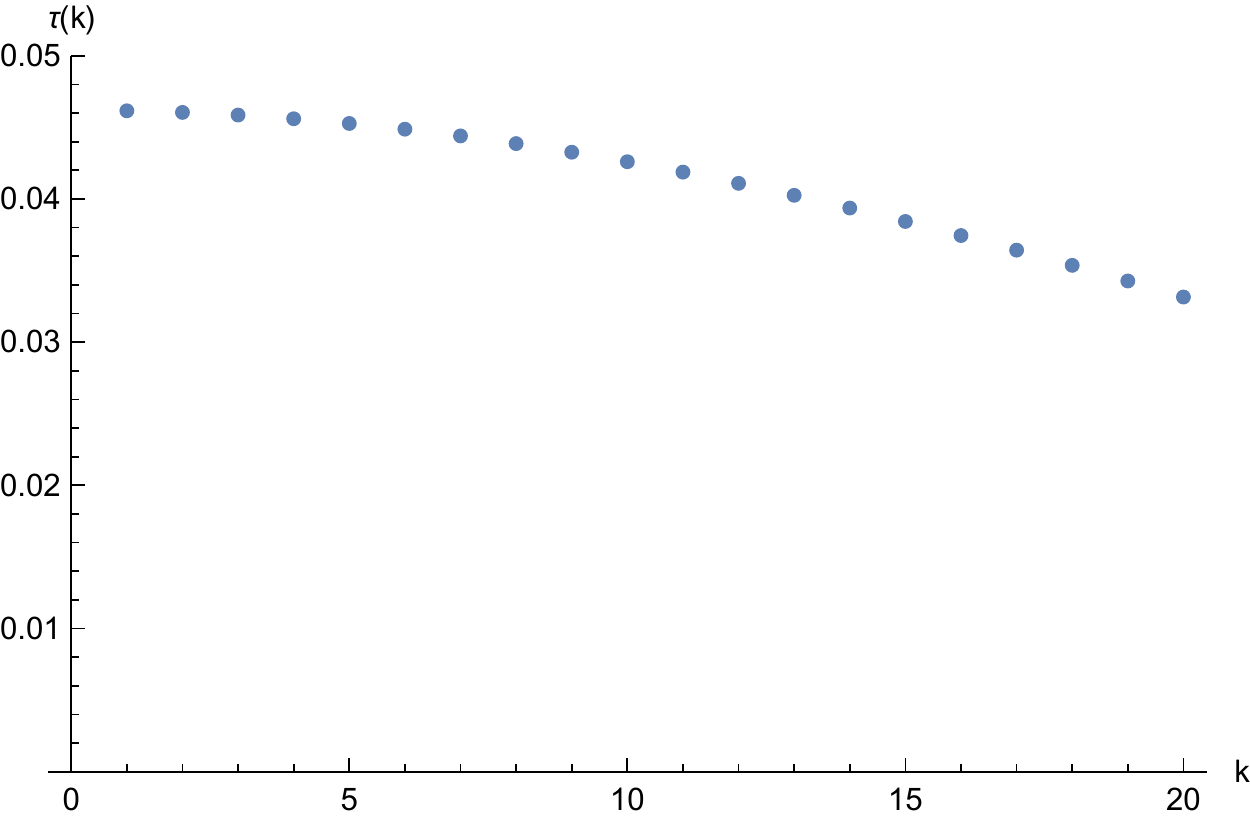}~~\includegraphics[width=6 cm]{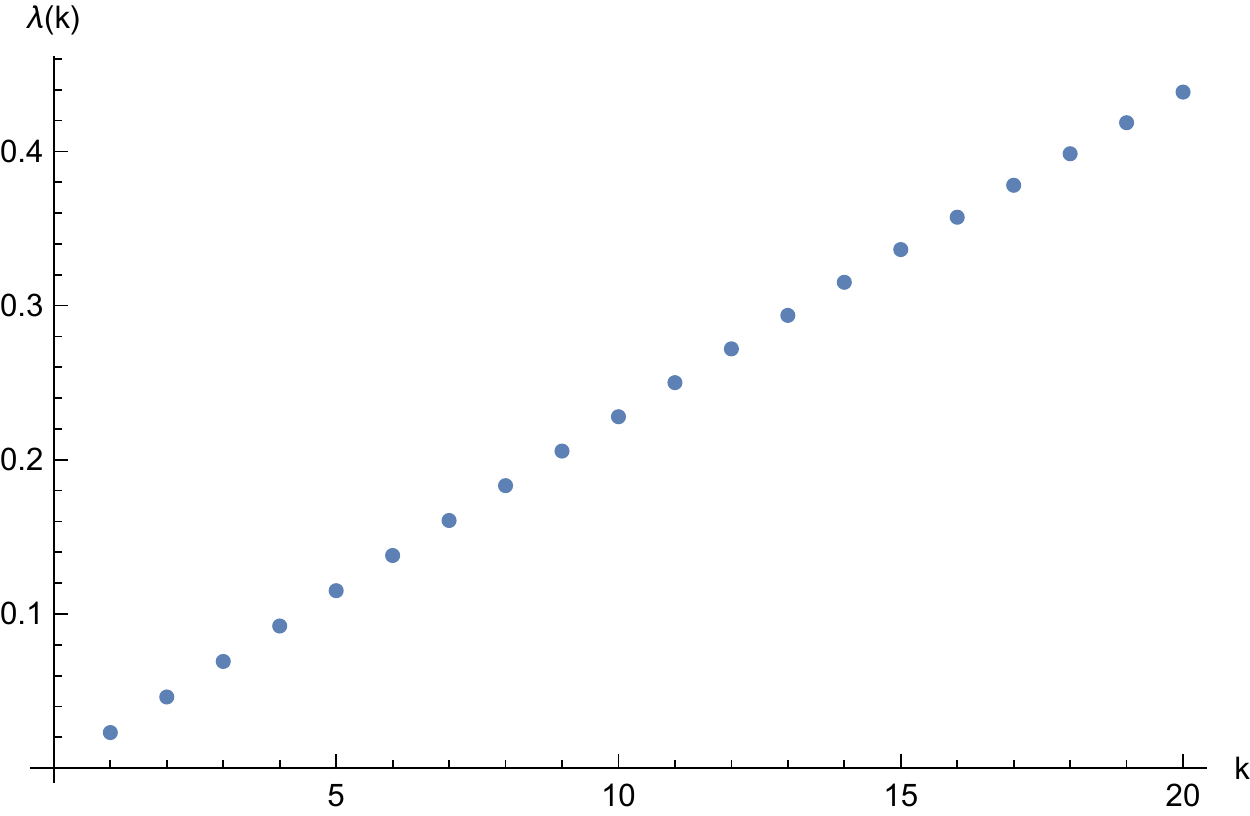}
\caption{(Left)} The coefficients $\tau (k)$ of $\xi (s)$ as a function of their order $k$. (Right) The coefficients $\lambda (k)$ as a function of $k$.
\label{fig-xi1}
\end{figure}

\subsection{${\cal T}_+(s)$}
We continue with two examples related to functions for which it is known that all non-trivial zeros are first-order and located
on the critical line \cite{ki,lagandsuz,mcp13}.

The first  function is defined as:
\begin{equation}
{\cal T}_+(s)=\frac{1}{4} [\xi_1(2 s)+\xi_1(2 s-1)].
\label{T+0}
\end{equation}
It has a pole of order unity at $s=0$:
\begin{equation}
{\cal T}_+(s)\sim-\frac{1}{8 s}+\frac{1}{24}(3\gamma+\pi-3\log(4\pi))+O(s).
\label{T+1}
\end{equation}
It tends to a constant at $s=1/2$:
\begin{equation}
{\cal T}_+(s)\sim \frac{1}{4} (\gamma-\log (4\pi))+O((s-1/2)^2).
\label{T+2}
\end{equation}
It has a pole of order unity at $s=1$:
\begin{equation}
{\cal T}_+(s)\sim \frac{1}{8 (s-1)}+\frac{1}{24}(3\gamma+\pi-3\log(4\pi))+O(s).
\label{T+3}
\end{equation}
It is even under $s\rightarrow 1-s$.

The function ${\cal T}_+(s)$ takes the following form on the critical line:
\begin{equation}
{\cal T}_+(1/2+i t)=2 |\xi_1(1+2 i t)| \cos[\arg(\xi_1(1+2 i t)|],
\label{T+CL}
\end{equation}
and thus its zeros correspond to $\arg(\xi_1(1+2 i t)|=(n+1/2)\pi$ for  any integer $n$.

The author has compiled a list of the first 1517 zeros of ${\cal T}_+(s)$, the last of which is at $t=\Re(s)\simeq 999.912$. (A copy of this may be obtained from the author.) This was used in the following numerical study.

In order to apply the results of the previous section, we consider the function with its poles eliminated:
\begin{equation}
{\tilde{\cal T}}_+(s)=s(1-s) {\cal T}_+(s).
\label{T+4}
\end{equation}
Truncated expansions  for ${\tilde{\cal T}}_+(s)$ about $s=0$ are available from Mathematica, either in symbolic or numeric form
(with the former rapidly increasing in complexity with increasing order). Numerical results obtained with the set of zeros mentioned are given in Table \ref{tab2}.
Note that in cases where all zeros lie on the critical line, we have on the right-hand side of (\ref{prod6}) a sum of the form
\begin{equation}
\sum_n\frac{1}{a_n^m}=\sum_{n>0} \left(\frac{1}{a_n^m}+\frac{1}{\overline{a_n^m}} \right),
\label{T+5}
\end{equation}
which is real.

\begin{table}
\label{tab2}
\begin{tabular}{|c|c|c|}\hline
$m$ &LHS (\ref{prod6}) & RHS (\ref{prod6}) \\ \hline
1 &-0.093389 & -0.091219\\
2&0.0930802 & 0.09091 \\
3& 0.000614337 & 0.000614336\\
4&-0.000299036 & -0.000299035\\
5 &$ -9.63049\times 10^{-6}$ & $-9.63049\times 10^{-6} $ \\
6& $2.99816\times 10^{-6}$ & $2.99816\times 10^{-6}$ \\
\hline
\end{tabular}
\caption{Numerical examples of the sum rule (\ref{prod6}) for the function ${\tilde{\cal T}}_+(s)$.}
\end{table}

\begin{figure}[tbh]
\includegraphics[width=12cm]{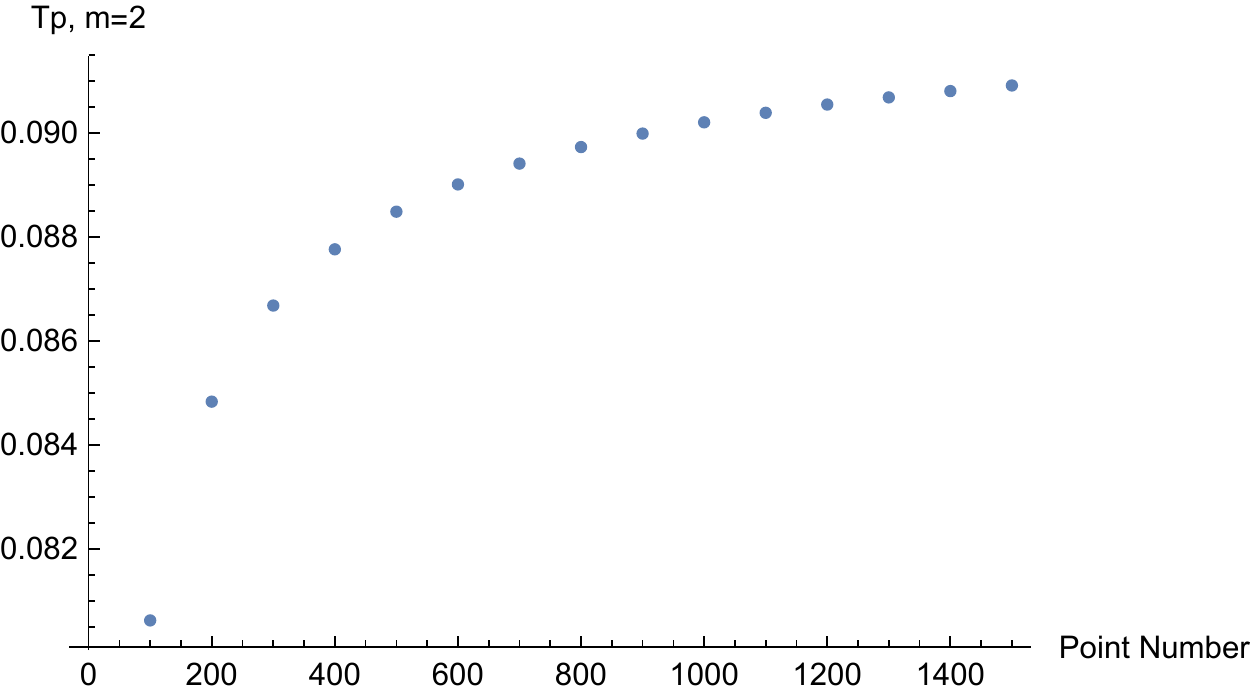}\caption{Example. of the sum rule for ${\cal T}_+(s)$ with $m=2$.}
\label{fig-Tp1}
\end{figure}

\begin{figure}[tbh]
\includegraphics[width=12cm]{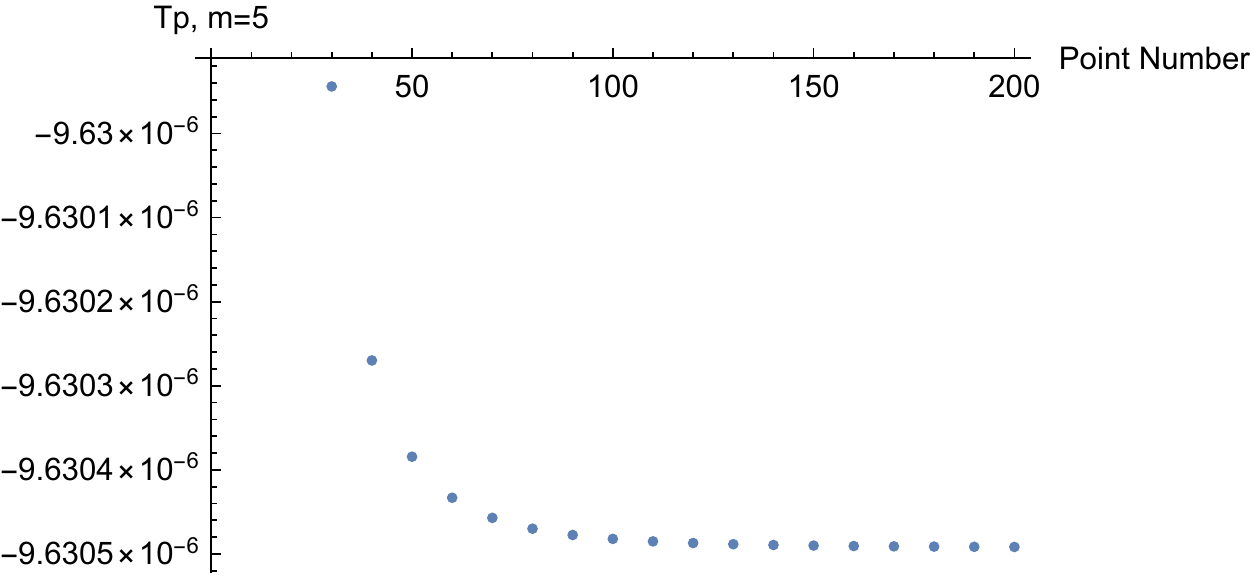}\caption{Example of the sum rule for ${\cal T}_+(s)$ with $m=5$.}
\label{fig-Tp2}
\end{figure}

Graphs illustrating the convergence are given in Figs. \ref{fig-Tp1} and \ref{fig-Tp2}. For the case $m=2$, the 1500 points in the data set is insufficient to yield high  accuracy, but for $m=5$ many fewer points can give good accuracy.

In order to implement the calculation of the elements in Tables \ref{tab1} and \ref{tab2}, for the left-hand column the series coefficients in the expansion of ${\cal T}_+(s)$ were evaluated round $s=0$. These were used in the denominator of the quotient  ${\cal T}_+'(s)/ {\cal T}_+(s)$, with the numerator form being obtained from the denominator by multiplication with their order. The series coefficients of the quotient were then obtained from Mathematica. For the right-hand column, the zeros  denoted $\rho_+$ were obtained from the list referred to above. (An alternative way of proceeding is to evaluate the expansion of  ${\cal T}_+(s)$ around $s=1$, divide out its negative term of order zero, then take the expansion in Mathematica of the logarithm of the result.)

Note that the equivalent expansion to (\ref{consist3}) for ${\cal T}_+(s)$ is:
\begin{equation}
\log\left( \frac{{\cal T}_+(s)}{{\cal T}_+(0)}\right)=- \sum_{k=1}^\infty \left(\frac{1}{k}\right) \sigma_k^+ s^k .
\label{consist3Tp}
\end{equation}

Given the ability to evaluate the coefficients $\sigma_k^+$ in two ways, a table comparing both as a function of $k$ was formed, and the difference between the two methods was studied. As we expect the accuracy of the terms in the left-hand column to ultimately worsen for large $j$, and that of the right-hand column to continue to improve with increasing $k$, we can choose to place the cross-over from one to the other at the point of minimum absolute difference. For summation over all the zeros in the listing, this gives $k=7$ as the cross-over, with an absolute difference of $1.505 \times 10^{-13}$. Actually, at $k=7$ the difference between the values
with all elements of the list and only 100 is just  $3.692 \times 10^{-15}$.

Using this method, we have verified that for the coefficients  the two sum rules hold:
\begin{equation}
\sum_{k=1}^\infty  \frac{1}{k}\sigma_k^+=0 ,~\sigma_1^+=-\sum_{k=1}^\infty  \sigma_k ^+.
\label{T+6}
\end{equation}
The recurrence relation (\ref{ke5}) is also valid for the $\sigma_k^+$.

As we have seen in the previous sub-section, an important quantity is the sum of $1/|\rho^+|^2$. We investigate this in Fig. \ref{fig-Tp3}.
Given the sum is slowly convergent, and we have Kiefer's accurate value for $4/|\rho|^2$ (taking into account the argument $2 s$ in 
${\tilde{\cal T}}_+(s)$) we compare the two in the figure, as well as showing their difference. This enables us to better estimate
the sum, taking it from a raw value of $0.182438$ to $0.186778$. The latter is probably still slightly too small. 

\begin{figure}[tbh]
\includegraphics[width=6 cm]{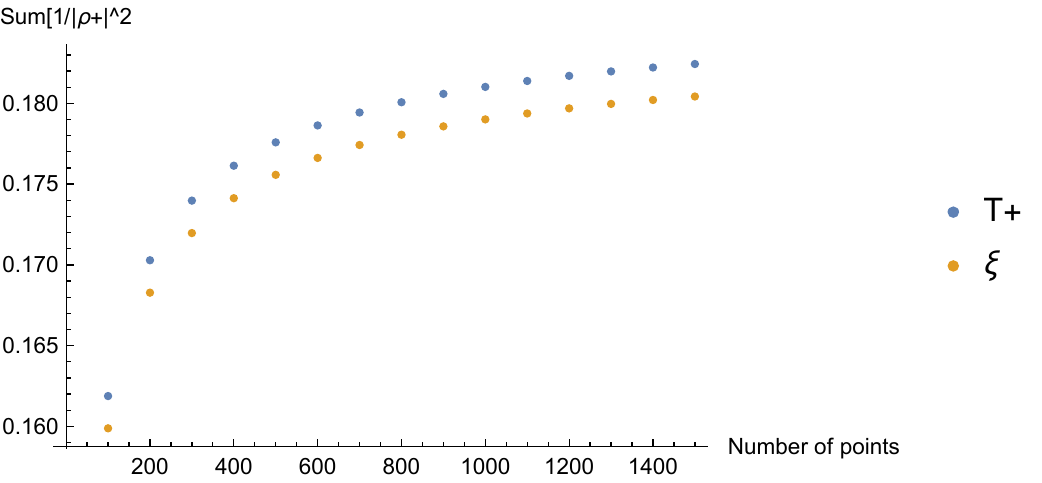}~~\includegraphics[width=6 cm]{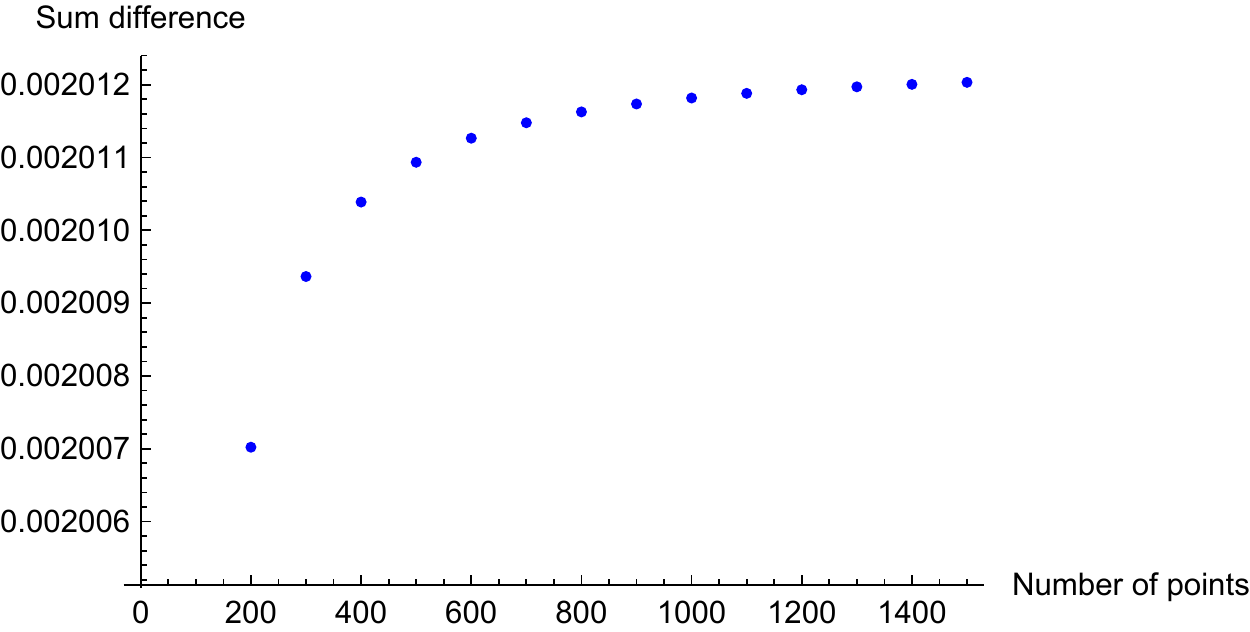}
\caption{ (Left) the sum of $1/|\rho^+|^2$ is compared with $4/|\rho|^2$; (right) the difference of these two sums.}
\label{fig-Tp3}
\end{figure}

The quantities corresponding to $\tau_k$, $\lambda_k$ for ${\cal T}_+(s)$ will be designated by a superscript "$+$". We have checked that the equations (\ref{ke7}) to (\ref{ke14}) may be used in their evaluation and the examination of their properties. The behaviour of these coefficients as a function of $k$ is shown in Fig. \ref{fig-Tp4}. As in the case of $\xi (s)$, $\tau^+_k$ decreases as $k$ increases,
while $\lambda^+_ k$ increases roughly linearly (with a slope round $0.089$, close to four times that in the previous case).

\begin{figure}[tbh]
\includegraphics[width=6 cm]{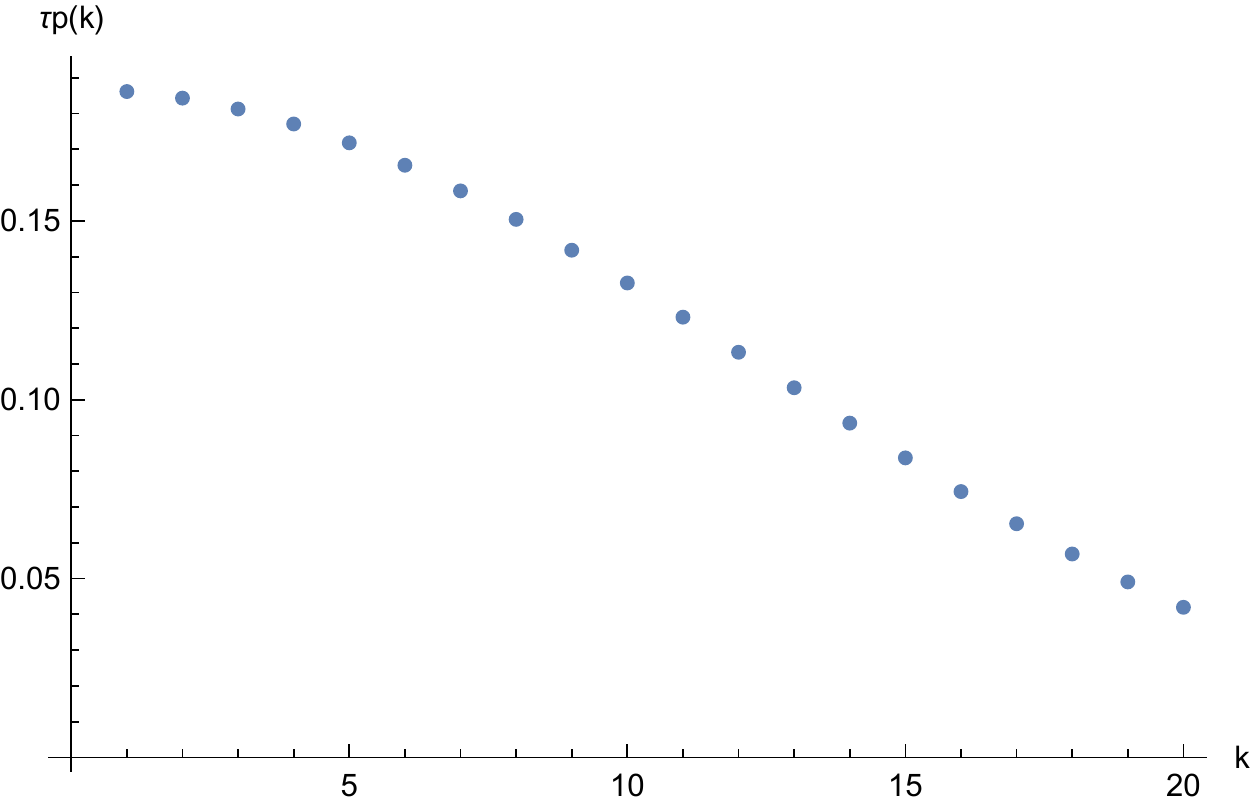}~~\includegraphics[width=6 cm]{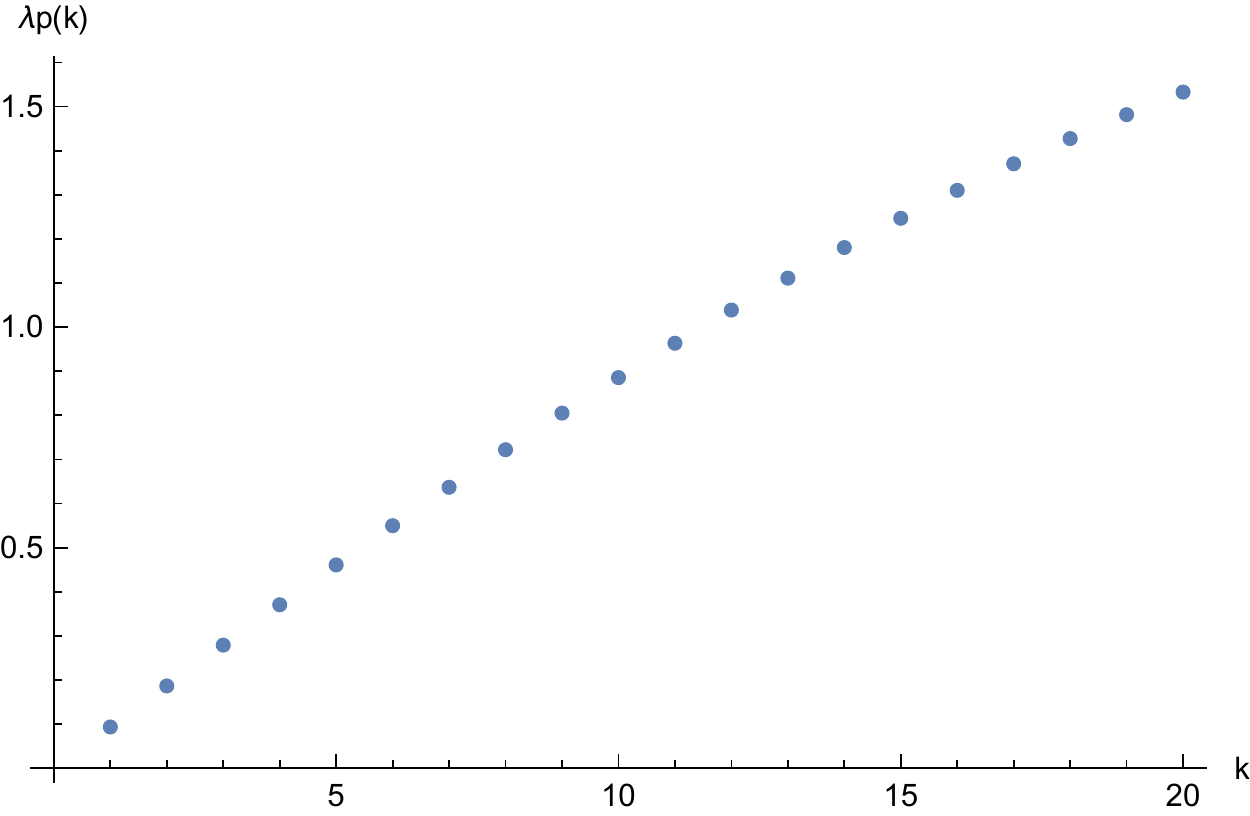}
\caption{(Left)} The coefficients $\tau^+ (k)$ of ${\tilde{\cal T}}_+(s)$ as a function of their order $k$. (Right) The coefficients $\lambda^+ (k)$ as a function of $k$.
\label{fig-Tp4}

\end{figure}
\subsection{${\cal T}_-(s)$}
This function is defined as:
\begin{equation}
{\cal T}_-(s)=\frac{1}{4} [\xi_1(2 s)-\xi_1(2 s-1)].
\label{T-0}
\end{equation}
This function is odd under $s\rightarrow 1-s$. It has poles at $s=0$, $s=1/2$ and $s=1$, and zeros on the real line at $s=3.91231$ and $s=-2.91231$. The function ${\cal T}_-(s)$ takes the following form on the critical line:
\begin{equation}
{\cal T}_-(1/2+i t)=2 i  |\xi_1(1+2 i t)| \sin[\arg(\xi_1(1+2 i t)|],
\label{T-CL}
\end{equation}
and thus its zeros correspond to $\arg(\xi_1(1+2 i t)|=n \pi$ for  any integer $n$.

We define a modified function in order to apply the sum rules (\ref{prod6}):
\begin{equation}
{\tilde{\cal T}}_-(s)=s(1-s) (s-1/2) {\cal T}_-(s).
\label{T-1}
\end{equation}
The first two terms of its series around $s=0$ are
\begin{equation}
{\tilde{\cal T}}_-(s)\sim \frac{1}{16}+\frac{1}{48} (-9 -3\gamma+\pi+\log(64 \pi^3))s+O(s^2) .
\label{T-2}
\end{equation}

The fact that all the non-trivial zeros of this function lie on the critical line was first established by P.R. Taylor, and published in a 
posthumous paper \cite{prt}. (P.R. Taylor was in fact killed on active duty with the RAF in North Africa during World War II; the paper was compiled from his notes by Mr. J.E. Rees, while the argument was revised and completed by Professor Titchmarsh.)

Table \ref{tab3} illustrates the results of numerical tests of equation (\ref{prod6}), again using a dataset of 1517 zeros of ${\cal T}_-(s)$
on the critical line running up to $t=1000$ (of course complemented by the two zeros on $t=0$ mentioned above).
It is evident that the size of the dataset is inadequate for accuracy in the case of the first two sums, but is entirely sufficient in the other four cases.

\begin{table}
\label{tab3}
\begin{tabular}{|c|c|c|}\hline
$m$ &LHS (\ref{prod6}) & RHS (\ref{prod6}) \\ \hline
1 &0.00100613 &  0.00317565\\
2& -0.00508561& -0.00725513 \\
3& 0.00838236  & 0.00838236\\
4& -0.00476457 & -0.00476457\\
5 & 0.000730707 & 0.000730707 \\
6& -0.000317834 & -0.000317834 \\
\hline
\end{tabular}
\caption{Numerical examples of the sum rule (\ref{prod6}) for the function ${\tilde{\cal T}}_-(s)$.}
\end{table}

Graphs illustrating the convergence of the sums over zeros for $m=1$ and $m=3$ are given in Figs. 3 and 4. These again shows slow convergence in the former case, and rapid convergence in the latter. Comparison of the data in Tables \ref{tab2} and \ref{tab3} also shows that the terms in the former case go more rapidly to zero than in the latter.

\begin{figure}[tbh]
\includegraphics[width=12cm]{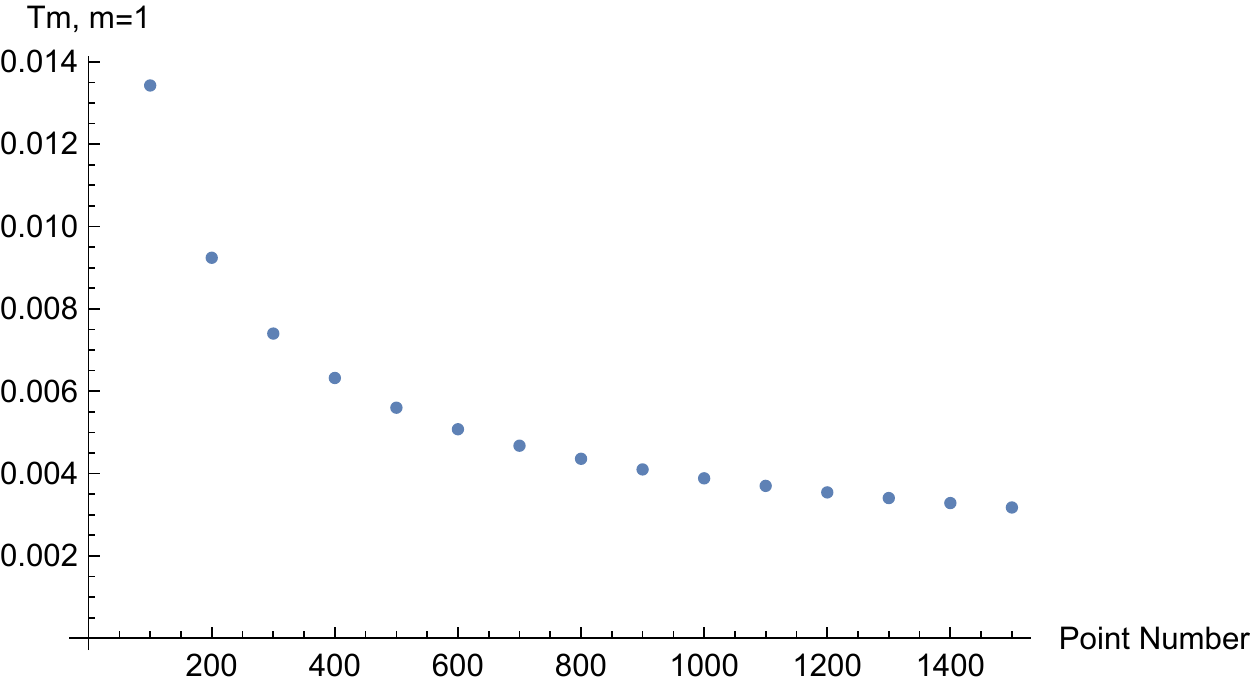}\caption{Example. of the sum rule for ${\cal T}_-(s)$ with $m=1$.}
\label{fig-Tm1}
\end{figure}

\begin{figure}[tbh]
\includegraphics[width=12cm]{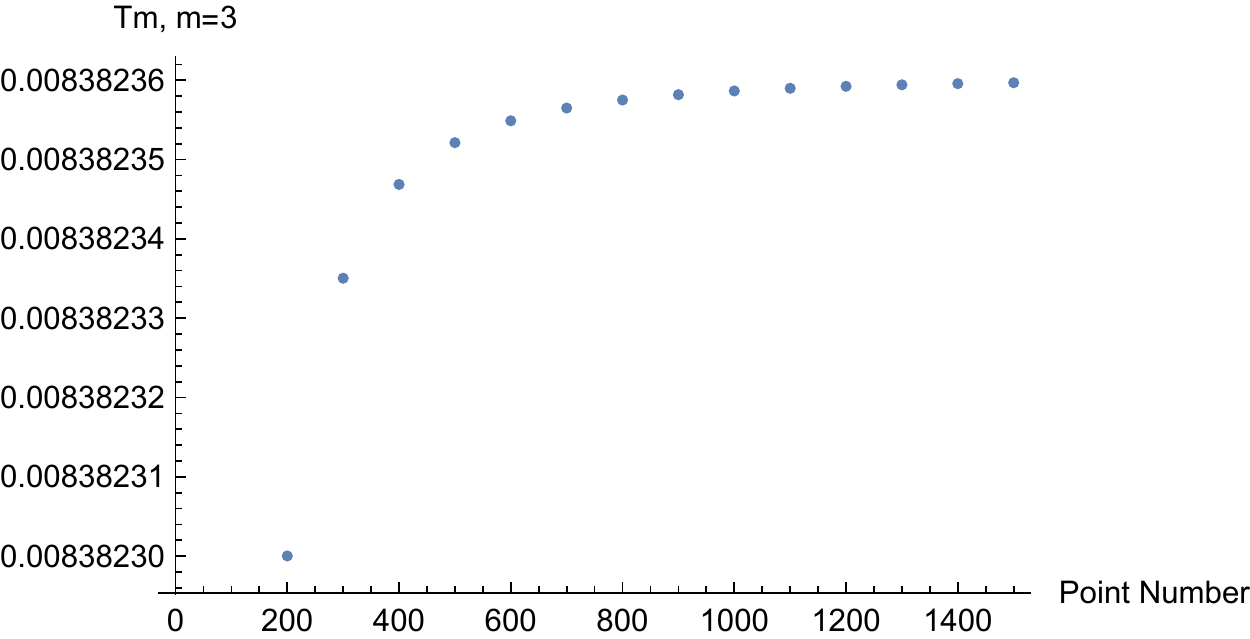}\caption{Example of the sum rule for ${\cal T}_-(s)$ with $m=3$.}
\label{fig-Tm2}
\end{figure}

The equivalents for ${\tilde{\cal T}}_-(s)$ of equations (\ref{ke3}-\ref{ke5}) have been verified. As far as the estimation of the sum over
$1/|\rho_-|^2$ is concerned, its convergence is illustrated in Fig. \ref{fig-Tm3}. The two zeros on the real axis of $s$ have not been included in the points for ${\tilde{\cal T}}_-(s)$. For comparison, the corresponding figures are shown for ${\tilde{\cal T}}_+(s)$ and $\xi(s)$, with the last of these three lying between the first two. As in the case of  ${\tilde{\cal T}}_+(s)$, we can estimate the
value of the sum for ${\tilde{\cal T}}_-(s)$ to be $0.356758$ with the  contribution of the real-axis zeros, or $0.173522$ without it.

\begin{figure}[tbh]
\includegraphics[width=12cm]{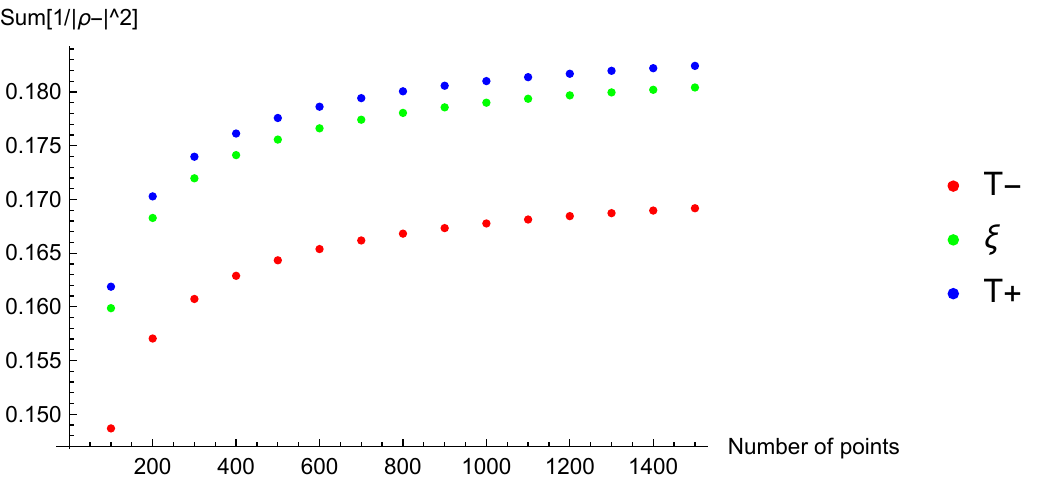}\caption{Convergence of the sum over the inverse magnitude squared of zeros for
the functions ${\tilde{\cal T}}_-(s)$,  ${\tilde{\cal T}}_+(s)$ and $\xi(s)$. Note that in the case of the first, the two off-axis zeros have not been included.}
\label{fig-Tm3}
\end{figure}

The coefficients $\tau^-_k$ and $\lambda^-_k$ for ${\cal T}_-(s)$ may be defined following the equations of  Keiper for $\xi(s)$.
However, their behaviour is quite different, and is dominated by the zeros off the critical line ($s=3.91231$ and $s=-2.91231$). This is illustrated in Fig. \ref{fig-Tm4} for $\tau^-_k$, where its behaviour as a function of $k$ is shown on the left, and is compared with the contribution from the two zeros
of the critical line (right). Both the $\tau^-_k$ and the $\lambda^-_k$ have negative real values.

\begin{figure}[tbh]
\includegraphics[width=7cm]{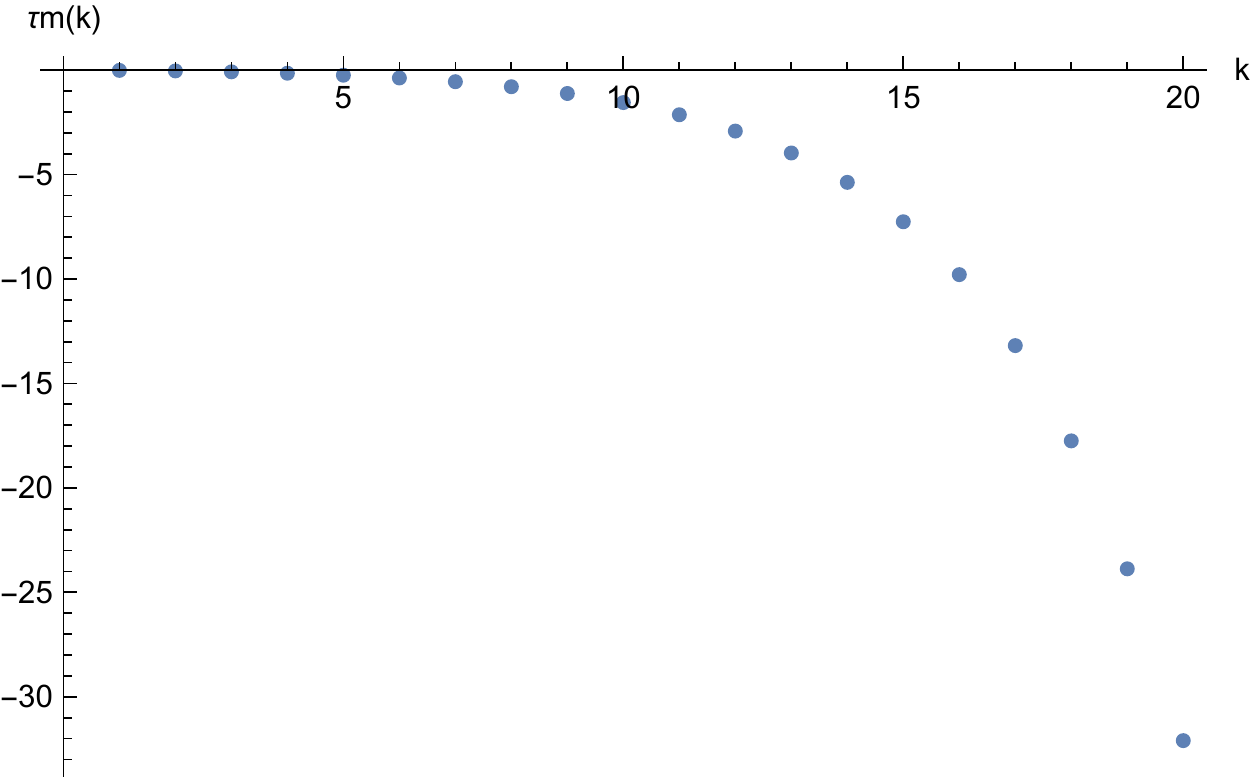}~\includegraphics[width=7cm]{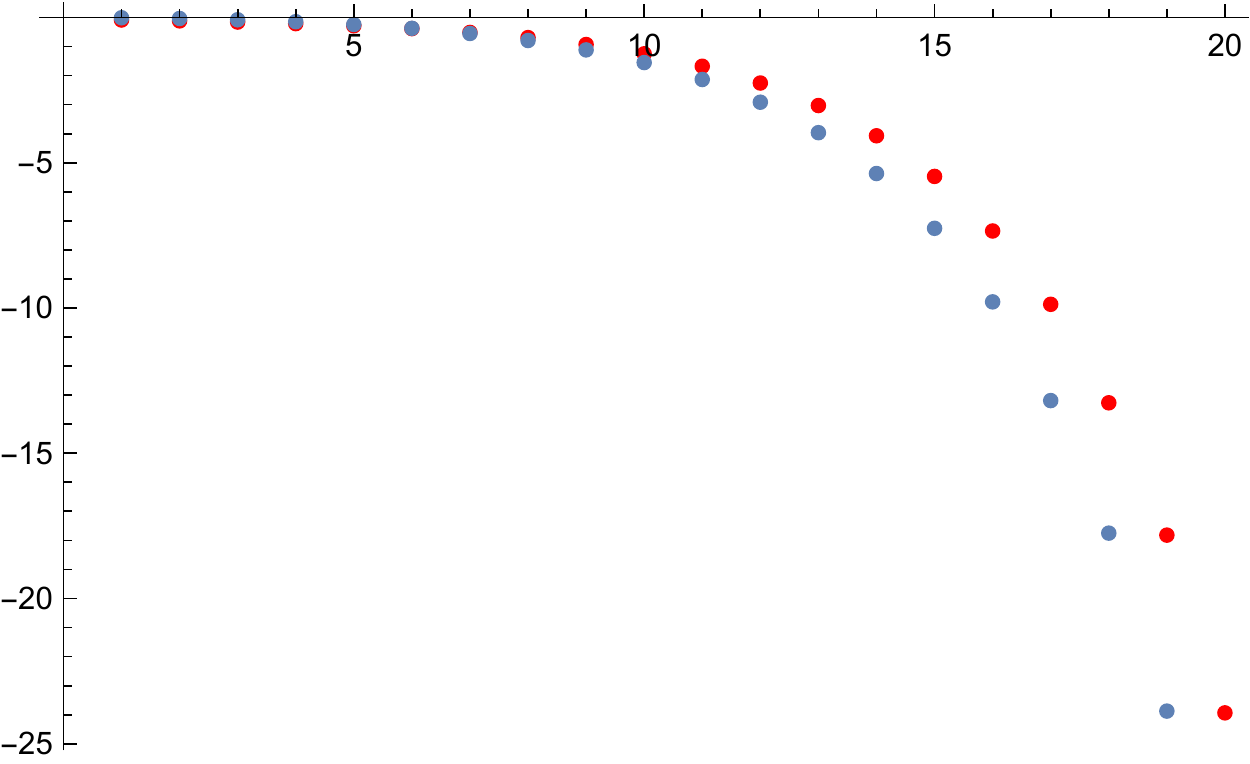}
\caption{ Behaviour of the coefficients $\tau^-(k)$ (left), and a comparison with the contribution of the two zeros of ${\cal T}_-(s)$ off the critical line (right).}
\label{fig-Tm4}
\end{figure}
\subsection{$L_{-4}(s)$}
This  Dirichlet $L$ function is  defined as a difference of two Hurwitz zeta functions:
\begin{equation}
L_{-4}(s)=\frac{1}{4^s}(\zeta(s,1/4)-\zeta (s,3/4)).
\label{lm41}
\end{equation}
It can be formed into a function even under $s\rightarrow 1-s$:
\begin{equation}
\frac{\Gamma (s) L_{-4}(s)}{\pi^{s/2}  \Gamma (s/2)}=\frac{\Gamma (1-s) L_{-4}(1-s)}{\pi^{(1-s)/2}  \Gamma ((1-s)/2)}.
\label{lm42}
\end{equation}
Its zeros all lie on the critical line $\sigma=1/2$ if the Generalised Riemann Hypothesis holds.

A set of 10,000 zeros of this function terminating at $t=1126.32039$ were used to compile the second column in Table \ref{tab4}, with the first column coming from Mathematica. Once again, the agreement with (\ref{prod6}) is excellent for $m\ge 3$.

\begin{table}
\label{tab4}
\begin{tabular}{|c|c|c|}\hline
$m$ &LHS (\ref{prod6}) & RHS (\ref{prod6}) \\ \hline
1 &-0.077784 &- 0.0776004  \\
2&0.0773251 & 0.0771415 \\
3& 0.000910626  & 0.000910626\\
4& -0.000437344 & -0.000437344\\
5 & -0.000021164 & -0.000021164 \\
6& $6.40057\times 10^{-6}$ &$6.40057\times 10^{-6}$ \\
\hline
\end{tabular}
\caption{Numerical examples of the sum rule (\ref{prod6}) for the function $L_{-4}(s)$.}
\end{table}

$L_{-4}(s)$ has no zeros off the critical line, as far as is known, and so its behaviour is similar to that of ${\tilde{\cal T}}_+(s)$ and 
$\xi(s)$.  The quantities $\sigma_k^{lm}$ have been calculated for it in the standard way, and obey the usual test relations of the kind
(\ref{ke3})-(\ref{ke7}). The convergence of the sum over the inverse squared modulus of zeros is compared in Fig. \ref{fig-Lm1}
for the three functions. This sum for $L_{-4}(s)$ is just in excess of its value for 10,000 points (0.1552)- see the graph at right in 
Fig. \ref{fig-Lm1}. Note that the density of zeros on the critical line for $L_{-4}(s)$ exceeds that for $\xi(s)$.

 \begin{figure}[tbh]
\includegraphics[width=7cm]{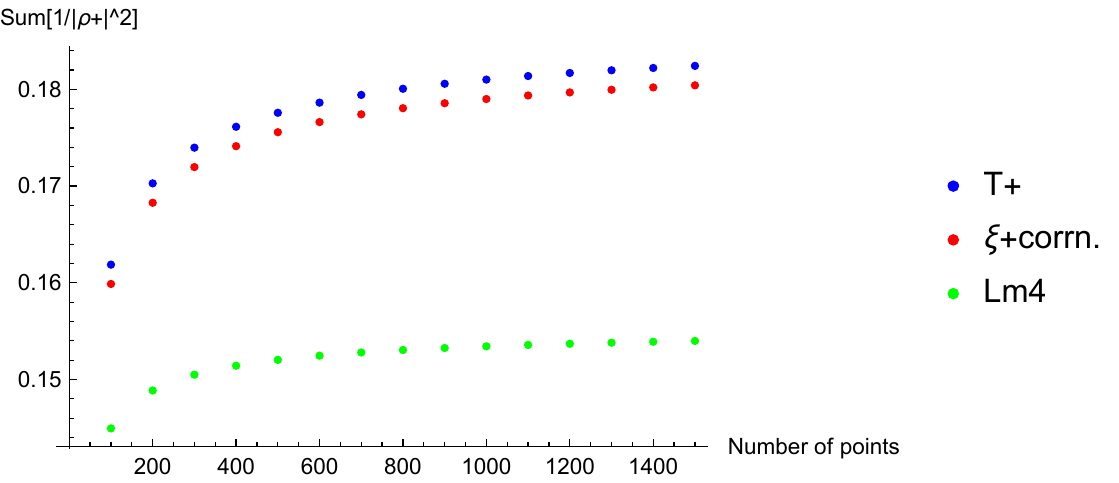}~\includegraphics[width=7cm]{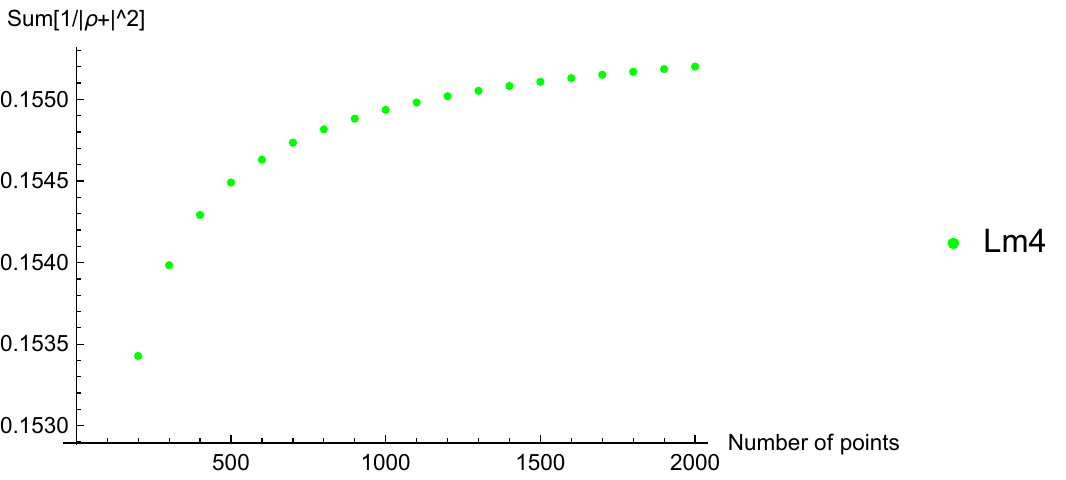}
\caption{Convergence of the sum over the inverse magnitude squared of zeros for
the functions $L_{-4}(s)$,  ${\tilde{\cal T}}_+(s)$ and $\xi(s)$ (left), and over an extended set of zeros for $L_{-4}(s)$ (right) . }
\label{fig-Lm1}
\end{figure}

The behaviour of the coefficients $\tau_k^{lm}$ and $\lambda_k^{lm}$ for $L_{-4}(s)$ is shown in Fig. \ref{fig-Lm2}.
There is an interesting difference between the behaviours of the $\tau$'s and $\lambda$'s for $\xi(s)$ and for $L_{-4}(s)$. In the fomer case the coefficients are monotonic, whereas in the latter case $\tau_k^{lm}$ is an oscillating function and while $\lambda_k^{lm}$ is generally increasing, it is no longer monotonic.
 \begin{figure}[tbh]
\includegraphics[width=7cm]{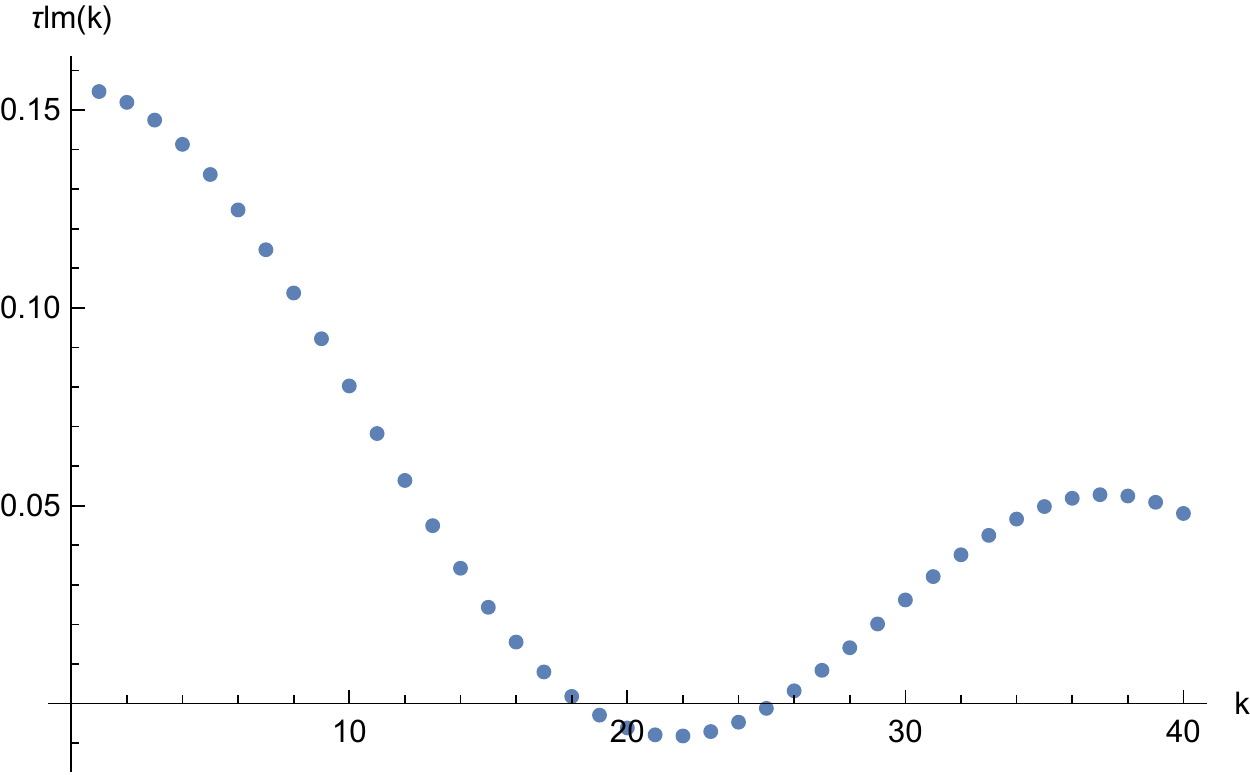}~\includegraphics[width=7cm]{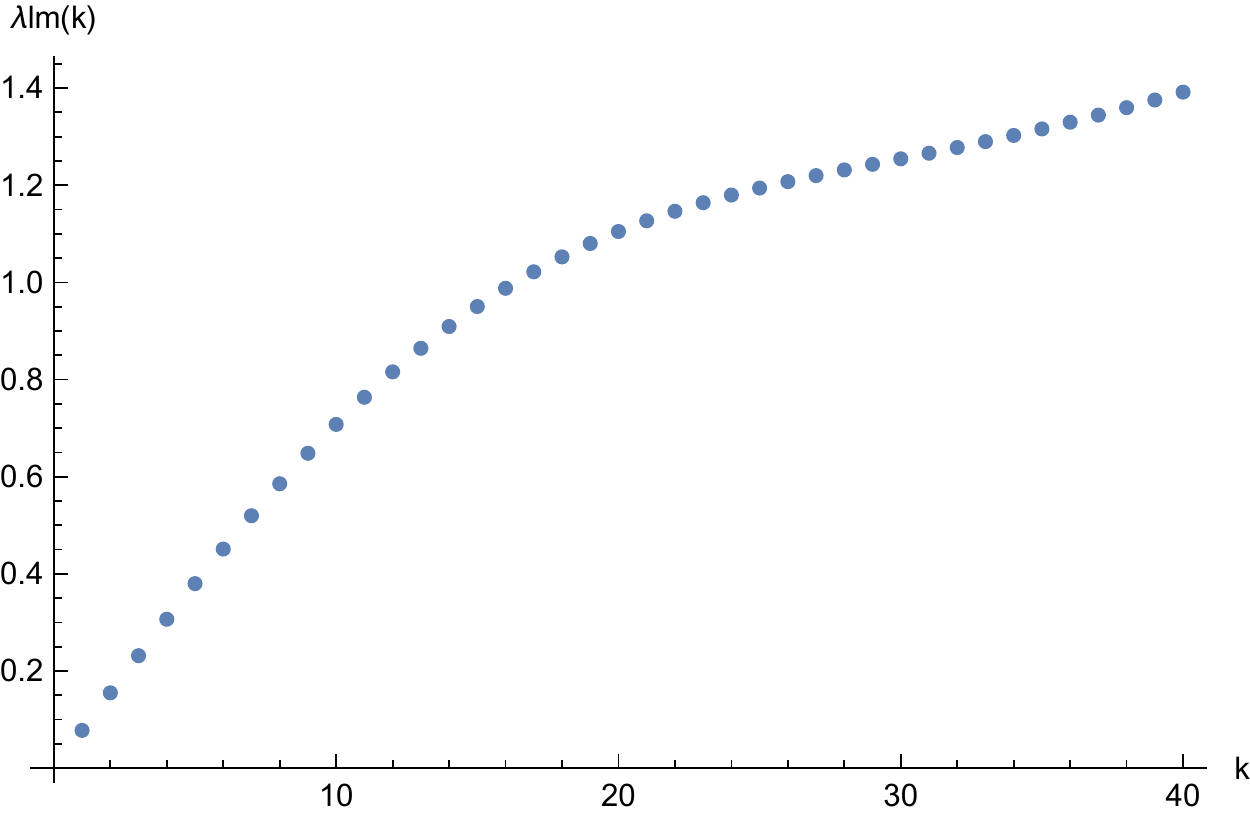}
\caption{the coefficients $\tau_k^{lm}$ and $\lambda_k^{lm}$ for $L_{-4}(s)$ as a function of $k$. }
\label{fig-Lm2}
\end{figure}
\section{The Link between  ${\tilde{\cal T}}_+(s)$, ${\tilde{\cal T}}_-(s)$ and $\xi(s)$}

The main motivation for this study is that the functions  ${\tilde{\cal T}}_+(s)$, ${\tilde{\cal T}}_-(s)$ and $\xi(s)$ are linked.
Indeed, from (\ref{T+0}) and (\ref{T-0}),
\begin{equation}
\xi_1(2 s)=2 [{\cal T}_+(s)+{\cal T}_-(s)],
\label{link-1}
\end{equation}
and
\begin{equation}
\xi_1(2 s-1)=2 [{\cal T}_+(s)-{\cal T}_-(s)].
\label{link-2}
\end{equation}
In terms of the functions  ${\tilde{\cal T}}_+(s)$ and  ${\tilde{\cal T}}_-(s)$ these are:
\begin{equation}
(1-s) \xi(2 s)=4 [{\tilde{\cal T}}_-(s)+(s-1/2) {\tilde{\cal T}}_+(s)],
\label{link-3}
\end{equation}
and
\begin{equation}
s \xi(2 s-1)=4 [{\tilde{\cal T}}_-(s)-(s-1/2) {\tilde{\cal T}}_+(s)].
\label{link-4}
\end{equation}
From \cite{ki}, \cite{lagandsuz} and \cite{mcp13}, the functions on the right-hand side of the last two equations above have all their non-trivial zeros on the critical line, all are simple, and occur alternately. The three functions have no singularities in the finite part of the complex plane. All the zeros of the left-hand side occur off the critical line, and their location and multiplicity are of course of much interest.

Given the additive nature of the relations (\ref{link-1}-\ref{link-4}), it is convenient to go from the representations  of functions in terms of
power series for their logarithm to power series for the functions themselves. To do this, we use the complex exponential Bell polynomials, for which there is a useful Wikipedia entry \cite{wiki}. The  complete exponential Bell polynomials occur in the exapansion:
\begin{equation}
\exp \left( \sum_{j=1}^\infty  x_j\frac{t^j}{j!}\right)=\sum_{n,k\ge 0} B_{n} (x_1,\ldots,x_{n})\frac{t^n}{n!}
\label{link-5}
\end{equation}
They satisfy the recurrence relation
\begin{equation}
B_{n+1} (x_1,\ldots,x_{n+1})=\sum_{i=0}^n { {n}\choose {i}}  B_{n-i} (x_1,\ldots,x_{n-i})x_{i+1}
\label{link-6}
\end{equation}
They have real coefficients, and the first five are:
\begin{eqnarray}
B_0=1,~~B_1(x_1)=x_1,&&B_2(x_1,x_2)=x_1^2+x_2, \nonumber\\
B_3(x_1,x_2,x_3)=x_1^3+3 x_1 x_2+x_3,& &B_4(x_1,x_2,x_3,x_4)=x_1^4+6 x_1^2 x_2+4 x_1 x_3+3 x_2^2+x_4.\nonumber \\
& &
\label{link-7}
\end{eqnarray}
They thus all begin with $x_1^n$ and end with $x_n$.

We apply (\ref{link-5}) to the generic expansion:
\begin{equation}
\log\left( \frac{F(s)}{F(0)}\right)=- \sum_{k=1}^\infty \left(\frac{1}{k}\right) \sigma_k^F s^k,
\label{link-8}
\end{equation}
with $\sigma_k^F$ denoting the sum over inverse $k$th powers of the zeros of $F$. We obtain then:
\begin{equation}
\frac{F(s)}{F(0)}=1+\sum_{k=1}^\infty B_k(-\sigma_1^F, -\sigma_2^F,\ldots,-\sigma_k^F (k-1)!) \frac{s^k}{k!}.
\label{link-9}
\end{equation}
The expansion given by (\ref{link-9}), up to order 4 in $s$, is 
\begin{eqnarray}
\frac{F(s)}{F(0)}&=&1-\sigma_1^F s+\left[\frac{(\sigma_1^F)^2-\sigma_2^F }{2}\right] s^2
  -\left[\frac{(\sigma_1^F)^3-3\sigma_1^F \sigma_2^F-2 \sigma_3^F }{6}\right] s^3 \nonumber\\
&&  +\left[\frac{(\sigma_1^F)^4-6(\sigma_1^F)^2 \sigma_2^F+8\sigma_1^F \sigma_3^F+3(\sigma_2^F)^2-6\sigma_4^F }{24}\right] s^4
  +\ldots
\label{link-10}
\end{eqnarray}

A second way of expressing (\ref{link-9}) is to use the infinite product involving its zeros:
\begin{equation}
\frac{F(s)}{F(0)}=\prod_k\left(1-\frac{s}{\rho_k^F}\right),
\label{link-11}
\end{equation}
where each term corresponds to one zero $\rho_k^F$ of $F(s)$. Expanding the product, we generate an expression for the function involving series coefficients formed from reciprocals of products involving non-identical permutations of the indices of the zeros for each term:
\begin{equation}
\frac{F(s)}{F(0)}=1-\left[\sum_k\frac{1}{\rho_k}\right]s+\left[\sum_{k\neq l}\frac{1}{\rho_k \rho_l}\right] s^2-\left[\sum_{k, l, m\neq}\frac{1}{\rho_k \rho_l \rho_m}\right] s^3+\ldots .
\label{link-12}
\end{equation}
Equation (\ref{link-10}) then gives an expression for each of the sums of products of zeros. For example, the term of order $s^2$ gives:
\begin{equation}
\frac{(\sigma_1^F)^2-\sigma_2^F}{2}=\sum_{k\neq l}\frac{1}{\rho_k \rho_l}.
\label{link12a}
\end{equation}
The approximate value of the left-hand side in (\ref{link12a}) for: $\xi(s)$ is $0.0233439$, for ${\tilde{\cal T}}_+(s)$ is 0.0974409 and for 
${\tilde{\cal T}}_-(s)$ is $-0.0050851$. The last of these is of course influenced by the two real  zeros off the critical line.

We now use (\ref{link-9}) to express the power series for the three functions occurring in (\ref{link-3}).
Firstly, for $\xi(2 s)$:
\begin{equation}
(1-s) \xi(2 s)=\frac{1}{2}(1-s)\left[1+\sum_{k=1}^\infty B_k(-\sigma_1^K, -\sigma_2^K,\ldots,-\sigma_k^K (k-1)!) \frac{(2 s)^k}{k!}\right].
\label{link-13}
\end{equation}
Next, for ${\tilde{\cal T}}_+(s)$ and  ${\tilde{\cal T}}_-(s)$:
\begin{equation}
4(s-\frac{1}{2}) {\tilde{\cal T}}_+(s)=-\frac{1}{2}(s-\frac{1}{2})\left[1+\sum_{k=1}^\infty B_k(-\sigma_1^+, -\sigma_2^+,\ldots,-\sigma_k^+ (k-1)!) \frac{ s^k}{k!}\right],
\label{link-14}
\end{equation}
and
\begin{equation}
4{\tilde{\cal T}}_-(s)=\frac{1}{4}\left[1+\sum_{k=1}^\infty B_k(-\sigma_1^-, -\sigma_2^-,\ldots,-\sigma_k^- (k-1)!) \frac{ s^k}{k!}\right],
\label{link-15}
\end{equation}

Table \ref{tabco} gives the first six coefficients of powers of $s$ in these three expansions, as well as the sum of the coefficients corresponding to the right-hand side  of (\ref{link-3}). The contributions from the right-hand side are dominated by those from ${\tilde{\cal T}}_+(s)$.
The differences between the coefficients on the left- and right-hand sides
are below $2 \times 10^{-14}$ in magnitude, the default level of accuracy in Mathematica.
\begin{table}
\begin{tabular}{|c|c|c|c|c|c|c|}\\ \hline
Coefficient&0&1&2&3&4&5\\ \hline
lhs(\ref{link-3})&1/2 &  -0.523096 & 0.0697834& -0.0486797&0.00401739 & -0.00210626 \\
rhs(\ref{link-3})-1st&1/4 & 0.000251533 & -0.00127128 &0.00209431& -0.0011858& 0.000170825\\
rhs(\ref{link-3})-2nd& 1/4 &-0.523347 & 0.0710547 &-0.050774 &0.00520319 &-0.00227708\\
total (rhs) &1/2 & -0.523096 &0.0697834 &-0.0486797&0.00401739& -0.00210626\\ \hline
\end{tabular}
\caption{The coefficients of the various powers of $s$ in the three terms occurring in equation (\ref{link-3}).} 
\label{tabco}
\end{table}

 Using  (\ref{link-9})  and (\ref{link-13}-\ref{link-15}), the relation coming from the coefficient of $s$ is
 \begin{equation}
 \sigma_1^K=\frac{1}{4}( \sigma_1^++ \sigma_1^-).
 \label{link-16}
 \end{equation}
 The relation coming from the coefficient of $s^k$ for $k\ge 2$ is
  \begin{eqnarray}
&&2^k \left[B_k (-\sigma_1^K, -\sigma_2^K,\ldots,-\sigma_k^K (k-1)!)-\frac{k}{2} B_{k-1} (-\sigma_1^K, -\sigma_2^K,\ldots,-\sigma_{k-1}^K (k-2)!)- \right]= \nonumber\\
&&\frac{1}{2} \left[B_k (-\sigma_1^+, -\sigma_2^+,\ldots,-\sigma_k^+ (k-1)!)+B_k (-\sigma_1^-, -\sigma_2^-,\ldots,-\sigma_k^- (k-1)!))
\right. \nonumber\\
&& \left. -k B_{k-1} (-\sigma_1^+, -\sigma_2^+,\ldots,-\sigma_{k-1}^+ (k-2)!) \right]
\label{link-17}
\end{eqnarray}
For example, for $k=2$, and using (\ref{link-16}),
\begin{equation}
\sigma_2^K=\frac{1}{16} [-(\sigma_1^-)^2+2 \sigma_2^- -4 \sigma_1^+  - (\sigma_1^+)^2+2  \sigma_1^-(2+ \sigma_1^++2  \sigma_2^+)].
\label{link-18}
\end{equation}
For larger values of $k$, the expressions rising from (\ref{link-17}) increase rapidly in complexity. This renders more difficult the task of trying to establish relationships between the $\tau^K$ and the $\tau^+$, $\tau^-$, which remains an unachieved goal of this work.
\section{Translations of Functions and Zeros}
From the work of Lagarias and Suzuki \cite{lagandsuz}, we  know that the following two functions have all their zeros on the critical line:
\begin{equation}
\frac{\xi_1 (2 s)}{s-1}-\frac{\xi_1 (2-2 s)}{s}
\label{trans1}
\end{equation}
and, for each fixed $T\geq 1$,
\begin{equation}
\frac{-\xi_1 (2 s) T^{s-1}}{s-1}+\frac{\xi_1 (2-2 s) T^{-s}}{s}.
\label{trans2}
\end{equation}
Also, for each $y\ge 1$,
\begin{equation}
\xi_1 (2 s) y^{s}+\xi_1 (2-2 s) y^{1-s}
\label{trans3}
\end{equation}
has all its zeros on the critical line for $1\le y\le y_*\approx 7.055507$, and exactly two off the critical line for $y>y_*$.

An obvious addition to this list is functions obtained from one having all its zeros on the critical line by translations and rescalings will have all their zeros on straight lines in the complex plane:
\begin{equation}
f(s)\rightarrow f(s-s_0)\rightarrow f(\alpha (s-s0)),
\label{trans4}
\end{equation}
for constants $s_0$ and $\alpha$. In this section, we will discuss translations, i.e. $\alpha$ will be kept as unity.

Consider then the effect of a translation on a function $f(s)$ having the product representation (\ref{prod7}). Putting $\hat{z}=z-z_0$,
the zeros $a_n$ become $\hat{a}_n=a_n-z_0$. The inverse sums of powers of zeros may then be evaluated as a function of $z_0$:
\begin{equation}
\hat{\sigma}_m=\sum_{p=0}^\infty  {} ^{-m}C_p (-z_0)^p \sigma_{m+p} =\sum_{p=0}^\infty \frac{\Gamma (m+p)}{\Gamma (p+1) \Gamma (m-1)}  z_0^p \sigma_{m+p}.
\label{trans5}
\end{equation}
Using (\ref{prod6}), (\ref{trans5}) may be rewritten as
\begin{equation}
\hat{\sigma}_m=\left.-\frac{1}{\Gamma (m-1)} \frac{d^m}{d z^m} \log f(z)\right|_{z=z_0}.
\label{trans6}
\end{equation}
Note that, even if the translation is along the critical line, and all the quantities $\sigma_m$ are real, the quantities $\hat{\sigma}_m$ are in general complex, as the translation changes  the link between zeros:  $a_m-z_0$ is linked to $1-a_m-z_0$.

Translations along the critical line are however of use in modifying relationships between zeros of different functions. For example, zeros of ${\cal T}_+(s)$ and  ${\cal T}_-(s)$ all lie on the critical line and alternate. It is interesting to investigate whether any similar relationship exists between the zeros of either and those of $\xi_1(2 s-1/2)$, which by the Riemann hypothesis all lie on the critical line.
We have investigated firstly whether zeros of $\xi_1(2 s-1/2)$ all lie after those of ${\cal T}_+(s)$. In fact, of the first 1500 zeros of the latter, this property does not hold in 232 or 15.5\% cases. We can also ask whether  zeros of $\xi_1(2 s-1/2)$  lie between successive zeros of ${\cal T}_-(s)$. This property fails only in four cases: 921, 995, 1307 and 1495. Using translations we can make the property hold for all 1500 zeros: translations $s_0=i t_0$ achieve this for $t_0$ in the range -0.080 to -0.036. This same translation along the critical line leads to a variation along it of the shifted ratio ${\cal T}_+(s)/{\cal T}_-(s)$ of the form
\begin{equation}
\frac{{\cal T}_+(1/2+i (t-t_0))}{{\cal T}_-(1/2+i (t-t_0))}=-i \cot [\arg \xi_1(1+2 i (t-t_0))].
\label{trans7}
\end{equation}
Hence, each point on the critical line can be made a zero of either the numerator or the denominator function in (\ref{trans7}). In addition,
each zero of $\xi_1(2 s-1/2)$ on the critical line can be made to coincide with a zero of ${\cal T}_-(s-s_0)$ or of ${\cal T}_+(s-s_0)$, or to lie between two such.

\section{An Argument in Support of the Riemann Hypothesis}

We now consider further properties of the functions ${\cal T}_+(s)$ and ${\cal T}_-(s)$. As commented above, it has been 
proved \cite{prt,lagandsuz,ki} that both of these functions have all their non-trivial zeros on the critical line, and that the zeros of the two functions interlace there. The celebrated Riemann hypothesis is that $\zeta (2 s-1/2)$ has all its non-trivial zeros on the critical line,
while numerical evidence has been presented \cite{mcp13} that the distribution functions of the zeros of all three of these functions agree in terms which remain finite as their argument tends to infinity on the critical line. We discuss in this section new analytical and graphical arguments which support the Riemann hypothesis.
 \begin{figure}[tbh]
\includegraphics[width=7cm]{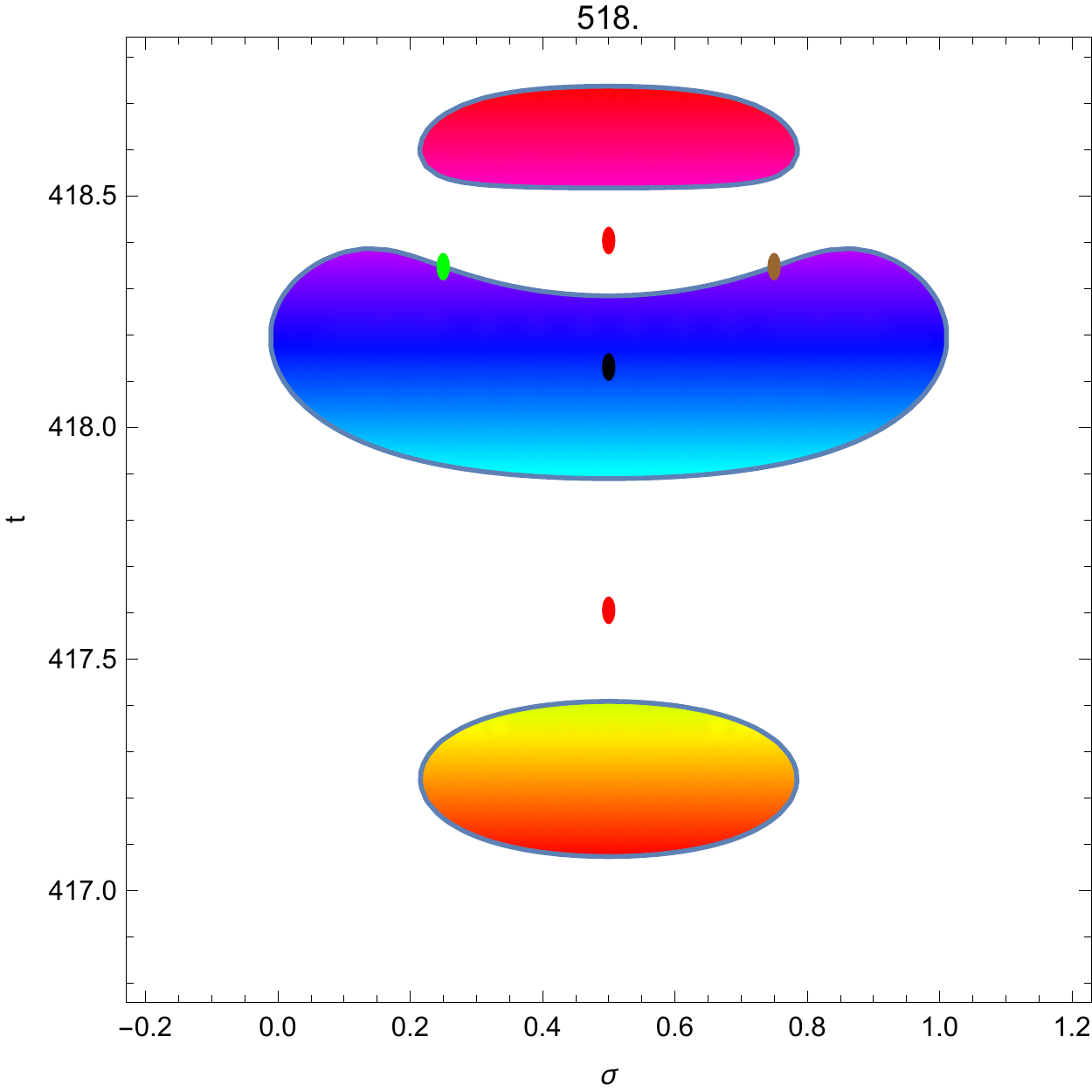}~\includegraphics[width=7cm]{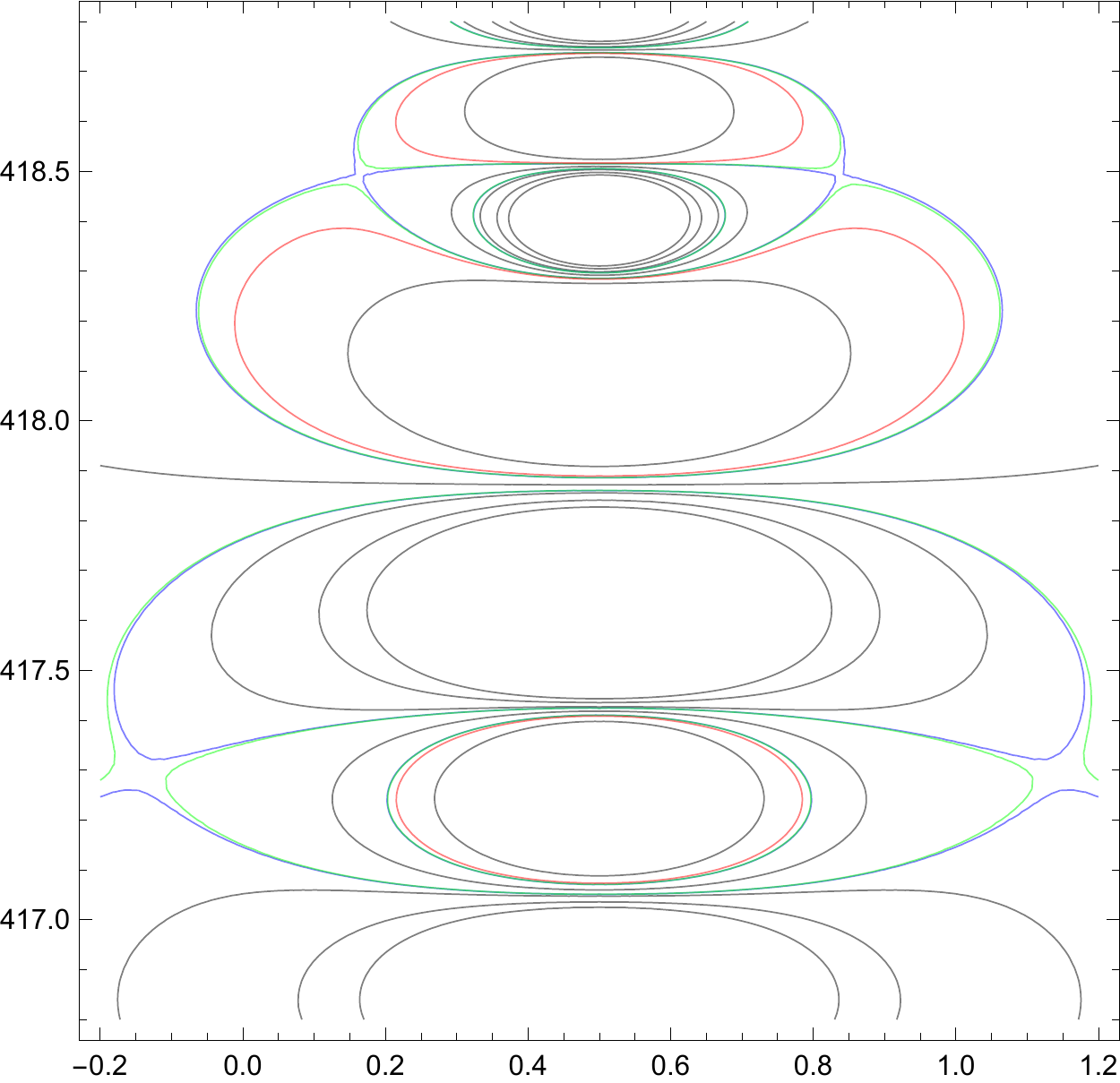}
\caption{At left, three successive regions (coloured)  in which $|{\cal V}(\sigma+i t)|>0$ are shown. Black dots represent zeros, red dots poles, while green and brown dots denote zeros of $\xi_1( 2s)$ and $\xi_1(2 s-1)$. At right, corresponding contours of constant modulus, with red denoting unit modulus, while green and blue contours correspond to moduli just above and below the values corresponding to the derivative zeros of ${\cal V}(\sigma+i t)$. }
\label{fig-srh1}
\end{figure}

 \begin{figure}[tbh]
\includegraphics[width=7cm]{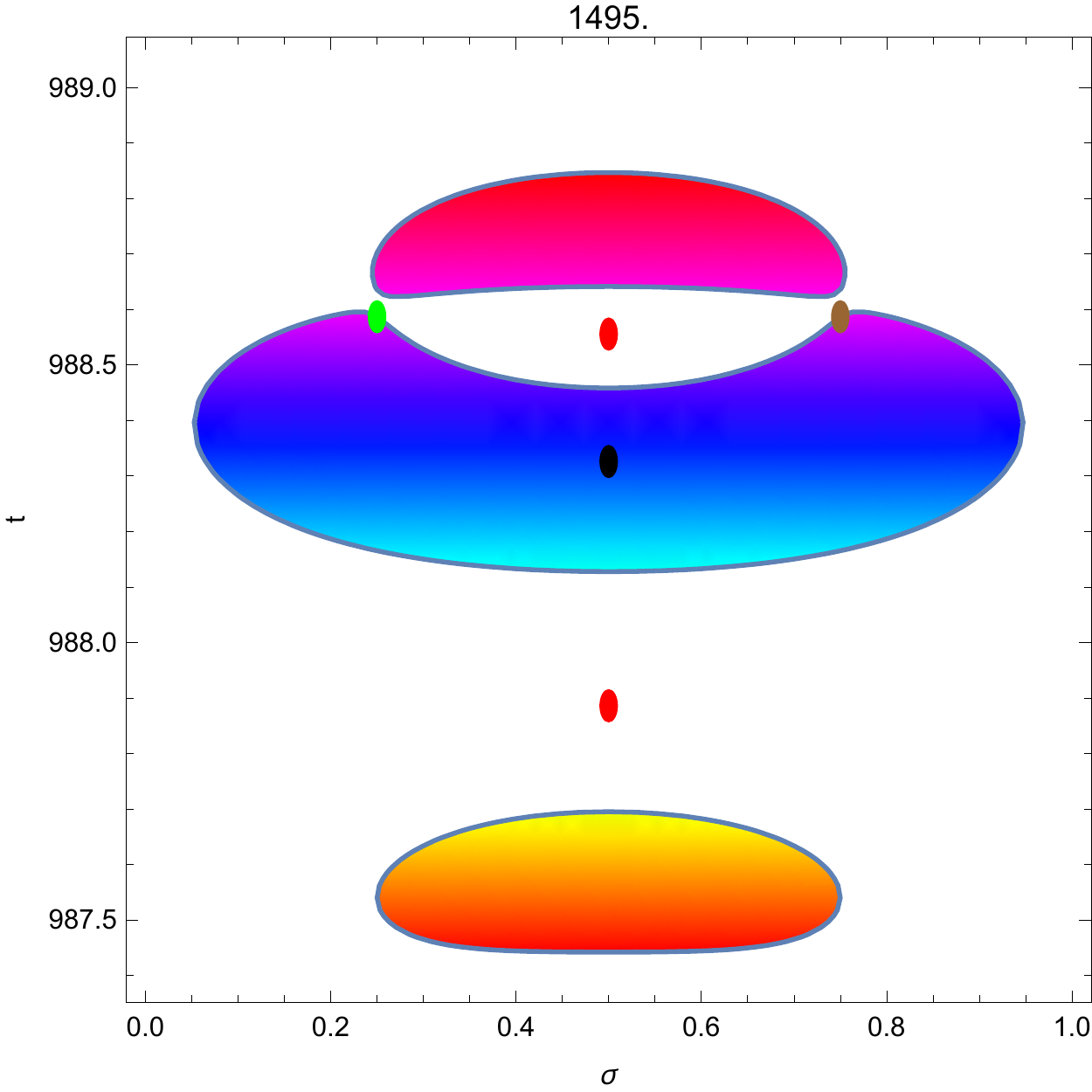}~\includegraphics[width=7cm]{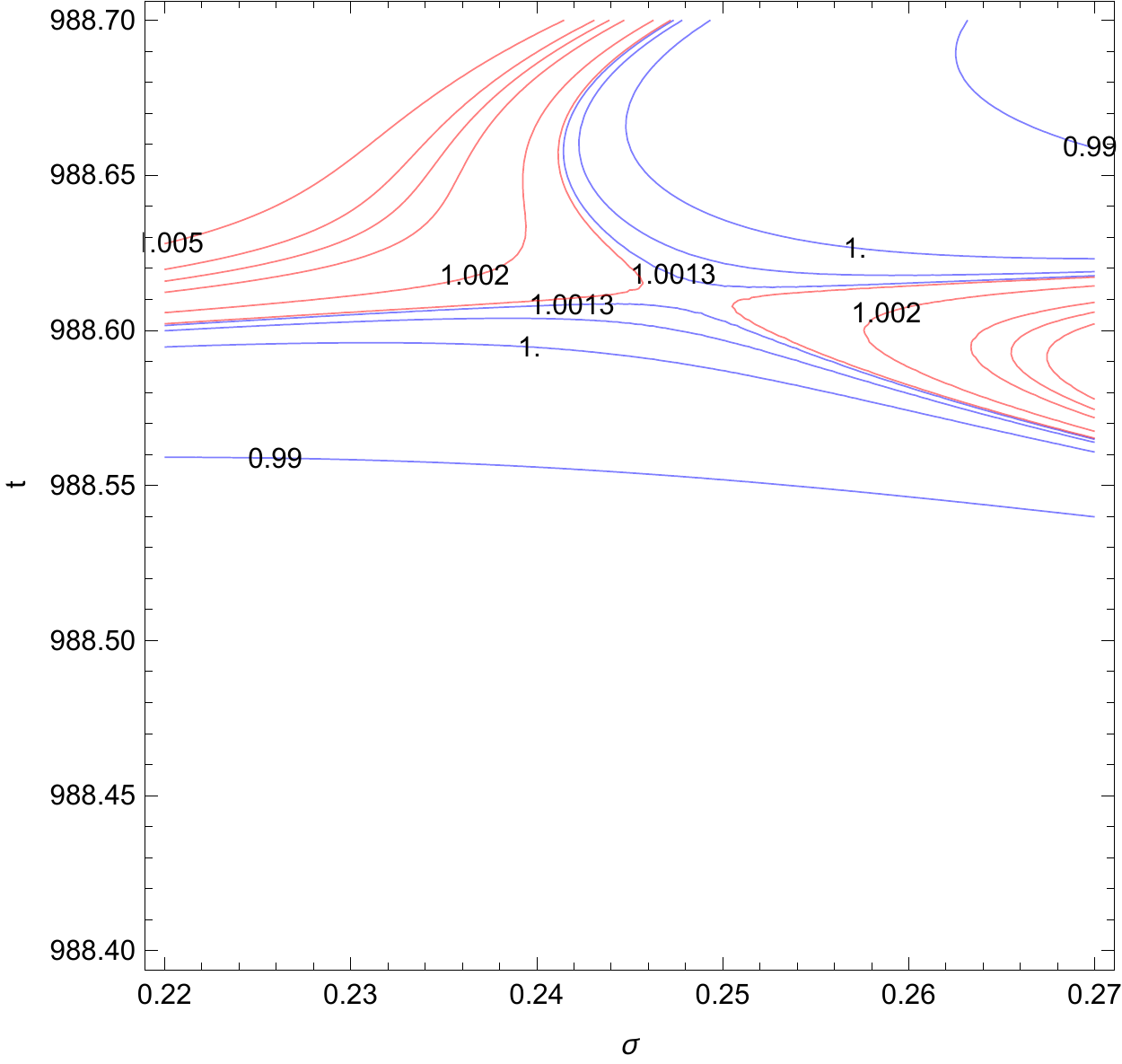}
\includegraphics[width=7cm]{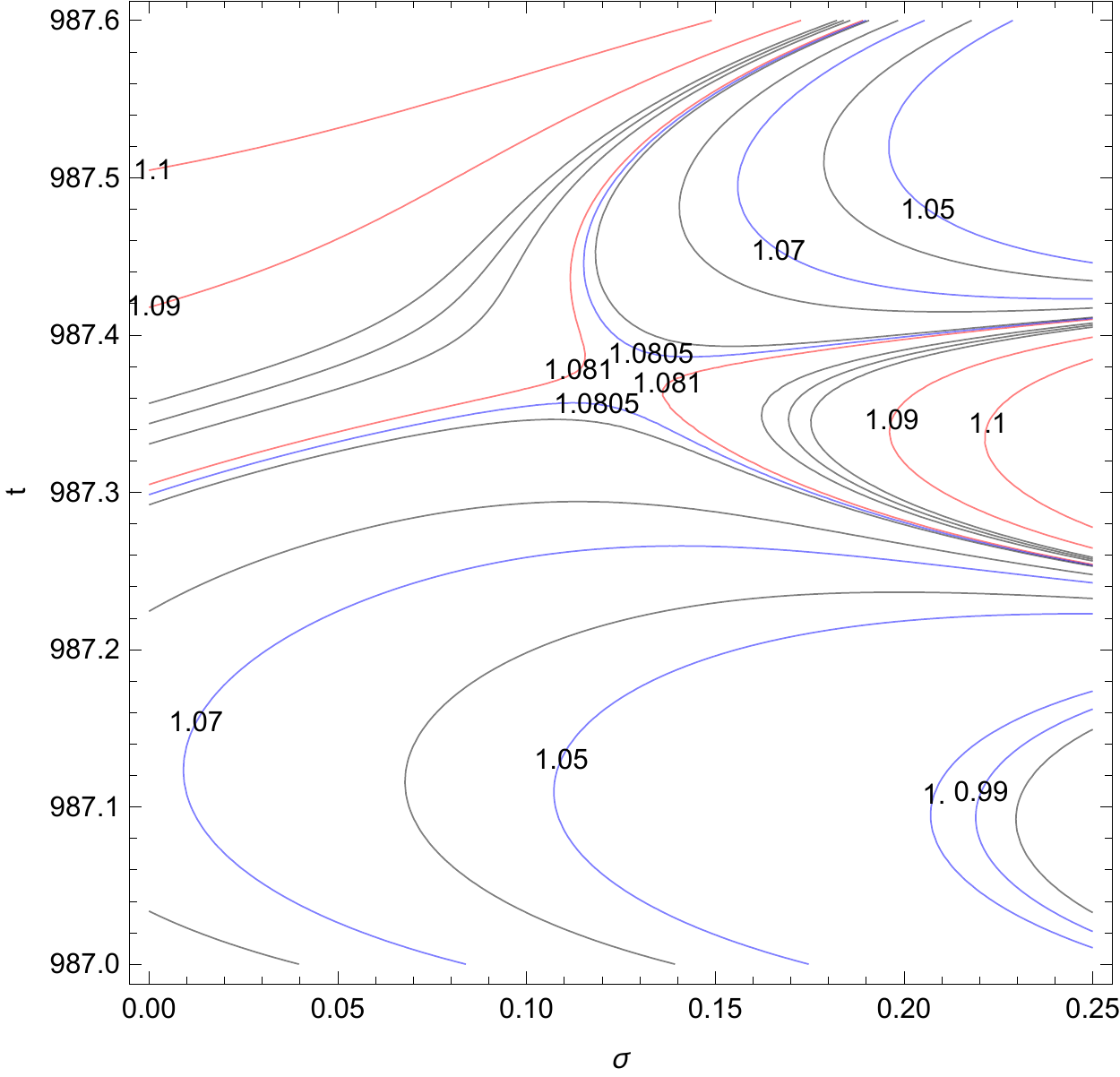}
\caption{At left, three successive regions (coloured)  in which $|{\cal V}(\sigma+i t)|>0$ are shown. Black dots represent zeros, red dots poles, while green and brown dots denote zeros of $\xi_1( 2s)$ and $\xi_1(2 s-1)$. At right, corresponding contours of constant modulus, showing the region around one of the  derivative zeros of ${\cal V}(\sigma+i t)$. Below: detail of contours of constant modulus in the region of the lower derivative zero.}
\label{fig-srh2}
\end{figure}

From the functions ${\cal T}_+(s)$ and ${\cal T}_-(s)$ we construct two further functions:
\begin{equation}
{\cal V}(s)=\frac{{\cal T}_+(s)}{{\cal T}_-(s)}=\frac{1+{\cal U}(s)}{1-{\cal U}(s)},
\label{eq-srh1}
\end{equation}
where
\begin{equation}
{\cal U}(s)=\frac{\xi_1(2s -1)}{\xi_1(2 s)}.
\label{eq-srh2}
\end{equation}
Note that
\begin{equation}
{\cal V}(s)=\frac{1+\sqrt{\pi} \Gamma (s-1/2)\zeta (2 s-1)/(\Gamma (s)\zeta (2 s))}{1-\sqrt{\pi} \Gamma (s-1/2)\zeta (2 s-1)/(\Gamma (s)\zeta (2 s))} ,
\label{eq-srh3}
\end{equation}
leading to the first-order estimate for $|{\cal V}(s)|$ for $1<<|\sigma|<<t$
\begin{equation}
|{\cal V}(s)|\sim 1+\sqrt{\frac{2}{t}}.
\label{eq-srh4}
\end{equation}
The corresponding argument estimate in  $1<<\sigma<<t$ is
\begin{equation}
\arg [{\cal V}(s)] \sim -\sqrt{\frac{2 \pi}{t}}.
\label{eq-srh5}
\end{equation}

We know that ${\cal V}(s)$ has all its non-trivial zeros and poles interlaced along the critical line, while the Riemann hypothesis is that
${\cal U}(s)$ has its zeros on $\Re (s)=3/4$ and its poles on $\Re (s)=1/4$. From (\ref{eq-srh1}), zeros of ${\cal U}(s)$ correspond 
to ${\cal V}(s)=1$ and poles to ${\cal V}(s)=-1$. Both then must lie on contours  of constant modulus $|{\cal V}(s)|=1$, which correspond to ${\cal U}(s)$ being pure imaginary: $\Re [{\cal U}(s)]=0$.

An investigation has been carried out into the relationship between the contours of constant modulus $|{\cal V}(s)|=1$ and the location of the zeros and poles of ${\cal U}(s)$, for the first 1500 zeros of ${\cal T}_+(s)$ and ${\cal T}_-(s)$. A convenient way of doing this in the symbolic/numerical/graphical package Mathematica is to use the option RegionPlot, and in this case to construct the regions in which $|{\cal V}(s)|\leq 1$,  or equivalently in which $\Re [{\cal U}(s)]<0$. These can be combined with contour plots of  $|{\cal V}(s)|$,
with contours appropriately chosen to highlight the location and behaviour around zeros of the derivative function ${\cal U}'(s)$, evaluated by numerical differentiation.

The results of this (rather labour intensive) investigation are quite suggestive. In each of the 1500 cases, a zero of ${\cal T}_+(s)$
on the critical line sits at the centre of a simply-connected region, whose boundary fully encloses the region $|{\cal V}(s)|\leq 1$.
The region $|{\cal V}(s)|> 1$ is multiply connected, in keeping with the estimate in equation (\ref{eq-srh3}) for $\sigma$ not too close to the critical line. Two examples are given in Figs. \ref{fig-srh1} and  \ref{fig-srh2}. The first example (zero 518 of ${\cal T}_+(s)$) of shows what may be described as  typical behaviour, while the second (zero 1495)  corresponds to one of the four exceptions mentioned in the previous section.

In the first example, the contours of constant modulus shown are for the levels 0.90, 1.0, 1.018, 1.019, 1.1, 1.169, 1.170, 1.2, 1.3, 1.4.
The zeros of ${\cal U}'(s)$, or equivalently of ${\cal V}'(s)$, are approximately  $s=-0.143103+417.293 i$, where $|{\cal V}(s)|=1.16957$,
and $s=0.163301+ 418.4092 i$, where $|{\cal V}(s)|=1.01891$. The upper derivative zero is defined by four contours of constant modulus, two in green provided by the zeros of ${\cal V}(s)$, and two in blue, one pertaining to the intervening pole of ${\cal V}(s)$ and the other to a closed curve enclosing the two poles and one zero. The lower structure is not complete as shown, but the outermost curve encloses two poles and an intervening zero.

In the second example, the zeros of ${\cal U}'(s)$ are approximately
$s=0.24809+988.611 i$, where $|{\cal V}(s)|=1.001357$, and $s=0.12566+ 987.373 i$, where $|{\cal V}(s)|=1.0808$. The structure near the upper derivative zero could be described as close to closed, with the two contours of constant modulus coming close to touching.
In consequence, the modulus at the derivative zero is much closer to unity than for the far more open structure around the lower derivative zero. For this example, both derivative zeros referred to are provided by two zeros of ${\cal V}(s)$ surrounding an intervening pole. 

We now commence the analytical exploration of these numerical results. The aim is to say as much as possible in support of the validity of the Riemann hypothesis, using as a tool the results already referred to about the properties of the functions ${\cal T}_+(s)$ and 
${\cal T}_-(s)$, or ${\cal U}(s)$ and ${\cal V}(s)$.

{\bf Remark:} The only possible closed contours whose boundary is an equimodular contour of ${\cal V}(s)$ in $t$ not small  cut the critical line.
This is a simple consequence of the Maximum/Minimum Modulus Theorems, since the only non-trivial zeros and poles of ${\cal V}(s)$ 
lie on the critical line. Furthermore, contours of constant modulus touching the critical line are precluded since ${\cal V}'(s)$ is never zero on the critical line.

\begin{theorem}
If all  non-trivial zeros of ${\cal V}'(s)$ correspond to a modulus $|{\cal V}(s)|>1$, then there exists a set  of simple zeros of ${\cal U}((s)$ off the critical line in one-to-one correspondence with the zeros of ${\cal T}_+(s)$ on the critical line.
\label{suffthm}
\end{theorem}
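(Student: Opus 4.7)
The plan is to associate, to each zero $\rho$ of ${\cal T}_+(s)$ on the critical line, a unique zero $s_\rho$ of ${\cal U}(s)$ off the critical line, by analyzing the connected components of the sublevel set $\{s:|{\cal V}(s)|<1\}$. Let $\Omega_\rho$ denote the component containing $\rho$ (nonempty since ${\cal V}(\rho)=0$). The structural claim that drives everything is: under the hypothesis, each $\Omega_\rho$ is a bounded simply-connected domain with smooth simple closed boundary $\partial\Omega_\rho$, on which $|{\cal V}|=1$ and ${\cal V}'\neq 0$ pointwise, and which contains $\rho$ as its unique zero of ${\cal V}$ and no poles of ${\cal V}$.

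To justify this I would use a Morse-type argument applied to $\log|{\cal V}|$, which is harmonic off the zeros and poles of ${\cal V}$. Its critical points are exactly the non-trivial zeros of ${\cal V}'$, and each such is a saddle because harmonic functions have no interior extrema. By hypothesis every saddle has $|{\cal V}|>1$, so the topology of the sublevel sets $\{|{\cal V}|\le c\}$ is constant for $c\in(0,1]$. As $c\to 0^+$ this sublevel set collapses to a disjoint union of small disks, one around each zero of ${\cal V}$; hence $\{|{\cal V}|\le 1\}$ remains a disjoint union of topological disks with the same zero correspondence. Boundedness of $\Omega_\rho$ is ensured by the asymptotic (\ref{eq-srh4}) together with the symmetry ${\cal V}(1-s)=-{\cal V}(s)$, and smoothness of $\partial\Omega_\rho$ follows from ${\cal V}'\neq 0$ on $\{|{\cal V}|=1\}$, which is again immediate from the hypothesis.

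Granted this structure, the remainder uses the argument principle. Inside $\Omega_\rho$, ${\cal V}$ is meromorphic with a single simple zero at $\rho$ and no poles, so $\arg{\cal V}$ winds exactly $+2\pi$ along $\partial\Omega_\rho$. Since $|{\cal V}|\equiv 1$ and ${\cal V}'\neq 0$ on $\partial\Omega_\rho$, the restricted map ${\cal V}|_{\partial\Omega_\rho}\colon \partial\Omega_\rho\to S^1$ is a degree-one local diffeomorphism, hence a global diffeomorphism onto the unit circle. In particular there is a unique $s_\rho\in\partial\Omega_\rho$ with ${\cal V}(s_\rho)=1$, equivalently ${\cal U}(s_\rho)=0$ by (\ref{eq-srh1}). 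This $s_\rho$ lies off the critical line because ${\cal V}(1/2+it)=-i\cot[\arg\xi_1(1+2it)]$ is purely imaginary and therefore never equals $1$. Simplicity of the zero comes from ${\cal U}'=2{\cal V}'/({\cal V}+1)^2$ with ${\cal V}(s_\rho)+1=2\neq 0$ and ${\cal V}'(s_\rho)\neq 0$. Finally, the assignment $\rho\mapsto s_\rho$ is injective since the $\Omega_\rho$ are pairwise disjoint.

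I expect the main obstacle to be the global Morse/level-set analysis in the second paragraph. The two chief threats are: (i) that $\Omega_\rho$ acquires a handle encircling a pole of ${\cal V}$, which by the standard min-modulus and handle-attachment arguments would force a critical value of $|{\cal V}|$ at some level below $1$ and thereby contradict the hypothesis; and (ii) that $\Omega_\rho$ escapes to infinity, which (\ref{eq-srh4}) precludes outside a large horizontal strip but requires some bookkeeping in the transitional region. Non-isolated or higher-order zeros of ${\cal V}'$ would likewise need separate treatment, but are excluded by the hypothesis placing every critical value strictly above $1$.
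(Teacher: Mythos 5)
Your proposal follows essentially the same route as the paper's proof: grow the level curves of $|{\cal V}|$ outward from a zero of ${\cal T}_+(s)$, use the hypothesis that every critical value of $|{\cal V}|$ exceeds unity to conclude that the level-one curve is a simple closed curve enclosing exactly that one zero and no poles, and then use the $2\pi$ winding of $\arg {\cal V}$ along it to locate a point where ${\cal V}=1$, i.e.\ a simple zero of ${\cal U}(s)$ off the critical line. Your Morse-theoretic bookkeeping for the sublevel sets and the explicit degree argument on the boundary are more careful than the paper's sketch (which appeals to Macdonald's reasoning and to monotonic increase of the argument), but the underlying idea is identical.
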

\begin{proof}
The proof generalises the reasoning of Macdonald  \cite{macd} to functions having poles and zeros. We note that 
$|{\cal V}(s)|=|{\cal V}(1-\overline{s})|$ is a symmetric function under reflection in the critical line, so curves of constant modulus share this property. Starting from a general zero of ${\cal T}_+(s)$, we constrain the family of closed curves on which its modulus is constant.
This family of curves has as its final member that curve of constant modulus touching a zero of ${\cal V}'(s)$, and so by the assumption of this Theorem the corresponding constant modulus exceeds unity. It then follows that there is a closed curve of constant modulus unity enclosing the zero of ${\cal T}_+(s)$. This closed curve intersects the critical line at points where ${\cal V}(s)={\cal U}(s)=\pm i$,
and at every point on it ${\cal U}(s)$ is pure imaginary. The curve encloses one zero of ${\cal V}(s)$, and thus the argument of this function increases monotonically through a range of $2\pi$ along it, ensuring that it passes through $\pm 1$ along it. On each such curve of constant modulus unity then there is a simple pole and a simple zero of ${\cal U}(s)$, establishing the one-to-one correspondence referred to in the Theorem statement.
\end{proof}

\begin{corollary}
If all  non-trivial zeros of ${\cal V}'(s)$ correspond to a modulus $|{\cal V}(s)|>1$, then the distribution function for zeros of $\xi_1(2 s-1/2)$ on the critical line with $0<t<T$ is
\begin{equation}
\frac{T}{\pi} \log\left(\frac{T}{\pi} \right)-\frac{T}{\pi}  .
\label{eq-srh6}
\end{equation}
\end{corollary}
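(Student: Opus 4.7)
The plan is to combine Theorem~\ref{suffthm}'s correspondence with the parameterization of $\mathcal{U}$-zeros by nontrivial $\xi_1$-zeros, and to reduce the desired distribution function to an asymptotic count of $\mathcal{T}_+$-zeros on the critical line.

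First I would count $N_+(T)$, the number of zeros of $\mathcal{T}_+$ on the critical line with $0<t<T$. By (\ref{T+CL}) these occur exactly at the $(n+1/2)\pi$-crossings of $\arg\xi_1(1+2it)$, and applying Stirling to the $\Gamma$-factor in $\xi_1(1+2it) = \tfrac{1}{2}(1+2it)(2it)\pi^{-(1+2it)/2}\Gamma((1+2it)/2)\zeta(1+2it)$ gives $\arg\xi_1(1+2it) = t\log(t/\pi) - t + O(\log t)$, so $N_+(T)\sim (T/\pi)\log(T/\pi)-T/\pi$. Since the zeros of $\mathcal{U}(s)=\xi_1(2s-1)/\xi_1(2s)$ with $0<\Im s<T$ are parameterized by nontrivial $\xi_1$-zeros $\rho$ in $0<\Im\rho<2T$ via $s=(\rho+1)/2$, the unconditional Riemann--von Mangoldt formula gives the same asymptotic size for that set.

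I would then apply Theorem~\ref{suffthm}: under its hypothesis each of these $N_+(T)$ zeros is matched to a simple $\mathcal{U}$-zero lying on a closed curve $|\mathcal{V}|=1$ about it, and the matching of counts in the previous step shows the injection is essentially surjective onto the nontrivial $\xi_1$-zeros in the range. Because each such closed curve encloses a single zero of $\mathcal{V}$ and no pole, the zero of $\mathcal{U}$ at $(\rho+1)/2$ and the pole at $(1-\bar\rho)/2$ lying on the same curve --- placed symmetrically via $|\mathcal{V}(s)|=|\mathcal{V}(1-\bar s)|$ --- must come from a single critical-line $\rho$; otherwise the companion zero/pole arising from $1-\bar\rho$ would have to be accommodated by a second curve whose extent is incompatible with the single-enclosure property inherited from the theorem. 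Hence every matched $\rho$ satisfies $\Re\rho=1/2$, and the zeros of $\xi_1(2s-1/2)$ on $\Re s=1/2$ --- which correspond via $s=\rho/2+1/4$ to critical-line $\rho$ with $0<\Im\rho<2T$ --- number $(T/\pi)\log(T/\pi)-T/\pi$ for $0<t<T$, as claimed.

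The hard part is the geometric step of the previous paragraph: rigorously ruling out the scenario in which an off-critical-line pair $(\rho,1-\bar\rho)$ of $\xi_1$-zeros is accommodated by two separate $|\mathcal{V}|=1$ curves around distinct $\mathcal{T}_+$-zeros. The four associated zero/pole points of $\mathcal{U}$ at real parts $(1\pm\sigma)/2$ and $\sigma/2,\ 1-\sigma/2$ can in principle be paired up symmetrically on two separate curves, and ruling this out will likely require a quantitative analysis of the outward growth of $|\mathcal{V}|$-level curves starting from a $\mathcal{T}_+$-zero --- tracking when neighboring critical-line zeros or poles of $\mathcal{V}$ would be engulfed --- and may need an additional structural input beyond the theorem's stated hypothesis.
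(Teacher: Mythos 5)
Your proposal follows the same route as the paper's own proof in its constructive part: count the critical-line zeros of ${\cal T}_+(s)$ asymptotically (the paper does this via the asymptotic forms (\ref{eq-srh7}) in $t\gg\sigma\gg 1$ rather than via the $(n+1/2)\pi$ crossings of $\arg\xi_1(1+2it)$ from (\ref{T+CL}), but the two computations are equivalent and give the same $\frac{T}{\pi}\log\frac{T}{\pi}-\frac{T}{\pi}$), invoke Theorem \ref{suffthm} to attach to each such zero a simple zero of ${\cal U}(s)$ on the enclosing contour $|{\cal V}(s)|=1$, and observe that this count coincides with the unconditional Riemann--von Mangoldt count of nontrivial zeros of $\xi_1(2s-1/2)$ in the strip. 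Up to that point you and the paper are doing the same thing.

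The ``hard part'' you flag at the end is a genuine gap, and it is precisely the step the paper's proof does not carry out: the paper disposes of it with the sentence that off-line zeros ``would have to be sufficiently rare to leave the distribution function (\ref{eq-srh6}) unaltered.'' As you observe, an off-line quadruple $\rho$, $1-\rho$, $\bar\rho$, $1-\bar\rho$ of $\xi_1$-zeros yields two zeros of ${\cal U}(s)$ at the same height, placed symmetrically about $\Re(s)=3/4$, and nothing in the counting argument prevents these from being accommodated one per contour on two distinct $|{\cal V}(s)|=1$ curves; the 1:1 correspondence and the total count are then both preserved even though the corresponding $\rho$ lie off the critical line. Since this scenario is not excluded, the conclusion that the \emph{on-line} count equals the full strip count does not yet follow --- neither from your argument nor from the paper's. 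Your closing paragraph therefore correctly diagnoses the missing ingredient (a quantitative control of how the level curves $|{\cal V}(s)|=c$ grow outward from a ${\cal T}_+$-zero before engulfing neighbouring zeros or poles on the critical line); the paper supplies no such control, so your attempt is faithful to the paper's approach and more candid about where it is incomplete.
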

\begin{proof}
From Theorem \ref{suffthm}, there exists for each zero of ${\cal T}_+(s)$ on the critical line a simple zero of ${\cal U}(s)$ lying on the contour $|{\cal T}_+(s)/{\cal T}_-(s)|$ enclosing the zero of ${\cal T}_+(s)$. Now the zeros of ${\cal T}_+(s)$ are all simple, and in 1:1 correspondence with the lines of a specified argument coming from $\sigma>>1$, and passing through the zero on $\sigma=1/2$.
We have the following asymptotic estimates for ${\cal T}_+(s)$, ${\cal T}_-(s)$ in $t>>\sigma>>1$:
\begin{eqnarray}
\left. \begin{tabular}{c}
${\cal T}_+(s)$\\
${\cal T}_-(s)$
\end{tabular}
\right\} &= & \exp\left[-\left(\sigma-\frac{1}{2}\right) \log 2\pi -\frac{\pi t}{2}+\left(\sigma-\frac{1}{2}\right) \log t\right]  \nonumber\\
& & \times   \exp\left[ i\left(\frac{\pi}{2}\left(\sigma-\frac{1}{2}\right) -t\left(1+\log\left(\frac{\pi}{t}\right)\right)\right)\right] \left( 1\pm i\sqrt{\frac{2 \pi}{t}}\right) \nonumber\\
& &  .
\label{eq-srh7}
\end{eqnarray}
This then gives the distribution function (\ref{eq-srh6}) for zeros  of ${\cal T}_+(1/2+ i t)$ lying between $0$ and $T$.
Now, this is precisely the formula \cite{edw} for the number of zeros of $\xi_1( 2 s-1/2)$ lying in the strip $1/4<\sigma<3/4$ with $0<t<T$. If any such zero were to lie off the critical line, it would have to occur in a pair of zeros symmetric about the critical line. However, we have established that there exists the set of simple zeros of $\xi_1(2 s)$ in precise 1:1 correspondence with the zeros of  ${\cal T}_+(1/2+ i t)$. Hence, any zeros off the critical line would have to be sufficiently rare to leave the distribution function (\ref{eq-srh6}) unaltered. In other words, the distributions functions for zeros on the critical line of $\xi_1( 2 s-1/2)$, ${\cal T}_+(s)$  and ${\cal T}_-(s)$ are all the same and given by (\ref{eq-srh6}),
if Theorem \ref{suffthm} holds. 
\end{proof}

There is a nice duality inherent in the previous Theorem and Corollary. Each contour $|{\cal V}(s)|=1$  corresponds to points on the critical line where (going from smaller to larger $t$) ${\cal V}(s)={\cal U}(s)=-i$ and then ${\cal V}(s)={\cal U}(s)=i$. Along the critical line (where ${\cal U}(s)$ is pure imaginary),
$\arg {\cal V}(s)$ goes from $-\pi/2$ below the zero of ${\cal T}_+(s)$ to $\pi/2$  above it.  Going around $|{\cal V}(s)|=1$ (i.e. where ${\cal U}(s)$ is pure imaginary) in the anti-clockwise sense, $\arg {\cal U}(s)$ goes from $-\pi/2$ before  the zero of $\xi_1(2 s-1)$ to $\pi/2$  after it.

We now investigate the condition upon which Theorem \ref{suffthm} holds: all  non-trivial zeros of ${\cal V}'(s)$ correspond to a modulus $|{\cal V}(s)|>1$. The argument of the Theorem without this assumption being made leads to an association between zeros and poles of ${\cal V}(s)$ and zeros of the derivative  ${\cal V}'(s)$. More specifically, each zero of  ${\cal V}'(s)$, say $s_d$ is associated with a triplet: either $ZPZ$, two zeros sandwiching an intervening pole, or $PZP$, two poles sandwiching an intervening zero. The condition for Theorem \ref{suffthm} is necessary to show that the zero and pole of ${\cal U}(s)$ lie on the curve(s) of unit modulus perforce associated with zero(s) of ${\cal V}(s)$, and not with pole(s). Let us consider three successive triplets: previous (associated with $s_{dp}$),
current (associated with $s_{dc}$) and subsequent (associated with $s_{ds}$).

\begin{theorem}
If $|{\cal V}(s_{dc})|>1$ for a system $ZPZ$, then $|{\cal V}(s_{dp})|>1$ and $|{\cal V}(s_{ds})|>1$.\\
If $|{\cal V}(s_{dc})|<1$ for a system $PZP$, then $|{\cal V}(s_{dp})|<1$ and $|{\cal V}(s_{ds})|<1$.
\label{condthm}
\end{theorem}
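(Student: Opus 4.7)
The plan is to argue topologically, interpreting each saddle value $|{\cal V}(s_d)|$ as the critical level at which the topology of the sub- or superlevel sets of $|{\cal V}|$ changes. For a $ZPZ$ triplet centred at a pole $P$ with flanking zeros $Z_L,Z_R$ and saddle value $c_c$, the local picture is standard Morse theory for $\log|{\cal V}|^2$: for $c<c_c$ the sublevel set $\{|{\cal V}|<c\}$ consists of two disjoint topological disks, one about each of $Z_L,Z_R$; for $c>c_c$ those disks have merged into a single annulus encircling $P$. The dual picture holds for a $PZP$ triplet with $\{|{\cal V}|<c\}$ replaced by $\{|{\cal V}|>c\}$: two disks around the flanking poles for $c>c_c$, and a single annular region enclosing the central zero for $c<c_c$. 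Setting up this topological dictionary rigorously is the first step.

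With this in hand, suppose $c_c>1$ for a current $ZPZ$ triplet. Then $\{|{\cal V}|=1\}$ lies in the ``two disks'' regime, and the component of $\{|{\cal V}|<1\}$ containing the right flanking zero $Z_R$ is a simply connected Jordan domain $D_R$, meeting the critical line in a short segment about $Z_R$. In a neighbourhood of $Z_R$, the complement $\{|{\cal V}|>1\}$ wraps $D_R$ as a local annulus. The subsequent triplet is the $PZP$ configuration $(P,Z_R,P')$ centred at $Z_R$, and I claim its two flanking poles $P$ and $P'$ lie in that same annular component of $\{|{\cal V}|>1\}$. This is because along the critical line one has, from (\ref{trans7}), $|{\cal V}(1/2+it)|=|\cot[\arg\xi_1(1+2it)]|$, which runs monotonically from $0$ at $Z_R$ to $\infty$ at each adjacent pole; each such pole is therefore joined to the local annulus by the critical-line segment beyond the point where the modulus first exceeds unity. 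With $P$ and $P'$ in the same $\{|{\cal V}|>1\}$-component that wraps $Z_R$, the subsequent $PZP$ saddle is in its annular-superlevel regime at $c=1$, forcing $c_s>1$. The inequality $c_p>1$ follows by the mirror argument applied at $Z_L$, and the second half of the theorem ($PZP$ current with $c_c<1$) is obtained by exchanging zeros with poles and $<$ with $>$ throughout.

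The step I expect to be the main obstacle is making the local-to-global topological passage rigorous. I must rule out that the annular region identified around $Z_R$ is separated from one of its flanking poles by some other component of $\{|{\cal V}|<1\}$ coming from a non-adjacent zero, or by some spurious piece of boundary coming from a distant saddle at a critical level very close to $1$. Controlling this rests on the simple-zero/simple-pole hypothesis, the interlacing of zeros and poles of ${\cal V}$ on the critical line, the reflection symmetry $|{\cal V}(s)|=|{\cal V}(1-\bar s)|$, and the Remark preceding Theorem \ref{suffthm} that forbids closed equimodular contours of ${\cal V}$ not crossing the critical line. Assembling these ingredients into a clean Morse-theoretic decomposition of the critical strip into cells bounded by the saddle contours of $|{\cal V}|$ is the technical content the proof really demands.
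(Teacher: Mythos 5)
Your strategy is the same one the paper's proof uses: analyse the equimodular contours of ${\cal V}$ around the interlaced triplets and drive the conclusion with the monotonic growth of $|{\cal V}|$ along the critical line between a zero and an adjacent pole. Recasting this as Morse theory for $\log|{\cal V}|$ is a fair formalisation, and you are more candid than the paper about where the difficulty sits. But the pivotal inference of your second paragraph --- ``with $P$ and $P'$ in the same $\{|{\cal V}|>1\}$-component that wraps $Z_R$, the subsequent $PZP$ saddle is in its annular-superlevel regime at $c=1$, forcing $c_s>1$'' --- does not follow as stated. The dictionary ``two disks for $c>c_s$, one annulus for $c<c_s$'' describes how components merge \emph{through that particular saddle}; membership of $P$ and $P'$ in a common component of the \emph{global} superlevel set $\{|{\cal V}|>1\}$ is a far weaker fact. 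By (\ref{eq-srh4}) and the symmetry $|{\cal V}(s)|=|{\cal V}(1-\overline{s})|$, one has $|{\cal V}|>1$ throughout the region $1\ll\sigma\ll t$ and its mirror image, so nothing in your argument stops $P$ and $P'$ from being joined inside $\{|{\cal V}|>1\}$ by a detour through large $|\sigma|$ even when $c_s<1$; common membership therefore cannot by itself certify $c_s>1$. Note also that the obstruction you flag at the end (that the annulus might be \emph{separated} from a flanking pole) is the opposite of the real danger, which is spurious connectivity.

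To close the gap you would need the sharper statement that the superlevel component containing $P$ and $P'$ has the level-one curve $\partial D_R$ as its inner boundary, together with a localisation of the saddle $s_{ds}$ itself: for instance, a Macdonald-type count \cite{macd} shows that a simply connected domain bounded by an equimodular curve and containing exactly one simple zero and no pole of ${\cal V}$ contains no zero of ${\cal V}'$, so $s_{ds}\notin D_R$; one must then rule out $s_{ds}$ lying in any other component of $\{|{\cal V}|<1\}$. In fairness, the paper's own proof is just as terse and supplies none of this either; it differs only in arguing directly about the level of the ``curve of constant modulus bounding the system,'' which is the right object to control but is likewise not pinned down there. So your proposal reproduces the paper's argument in spirit and inherits its main unproved step, but in its present formulation it replaces that step with an inference that is actually invalid rather than merely incomplete.
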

\begin{proof}
Given a system $ZPZ$ with $|{\cal V}(s_{dc})|>1$. Then the system is enclosed by a curve of constant modulus larger than unity,
and has before it and after it on the critical line systems $PZP$. As the modulus of ${\cal V}(s)$ increases monotonically on the critical line going from a zero towards an adjacent pole, the curves of constant modulus bounding the previous and subsequent systems $PZP$ must also correspond to moduli exceeding unity.

Given a system $PZP$ with $|{\cal V}(s_{dc})|<1$. Then the system is enclosed by a curve of constant modulus smaller than unity,
and has before it and after it on the critical line systems $ZPZ$. As the modulus of ${\cal V}(s)$ decreases monotonically on the critical line going from a pole towards an adjacent zero, the curves of constant modulus bounding the previous and subsequent systems $ZPZ$ must also correspond to moduli smaller than unity.
\end{proof}

This argument provides a justification for  the assumption underlying Theorem \ref{suffthm}.

\end{document}